\newtheorem{theorem}{Theorem}
\theoremstyle{plain}
\newtheorem{corollary}{Corollary}
\newtheorem{definition}{Definition}
\newtheorem{example}{Example}
\newtheorem{lemma}{Lemma}
\numberwithin{equation}{section}
\begin{document}
\title{GENERALIZED $(\tilde{\kappa}\neq -1,\tilde{\mu})$-PARACONTACT\
METRIC\ MANIFOLDS\ WITH\ $\xi (\tilde{\mu})=0$}
\author{I. K\"{u}peli Erken}
\address{Art and Science Faculty,Department of Mathematics, Uludag
University, 16059 Bursa, TURKEY}
\email{iremkupeli@uludag.edu.tr}
\subjclass[2010]{Primary 53B30, 53C15, 53C25; Secondary 53D10}
\keywords{Paracontact metric manifold, $(\tilde{\kappa},\tilde{\mu})$%
-paracontact metric manifold, nullity distributions.}

\begin{abstract}
We give a local classification of generalized $(\tilde{\kappa}\neq -1,\tilde{%
\mu})$-paracontact metric manifold $(M,\tilde{\varphi},\xi ,\eta ,\tilde{g})$
which satisfies the condition $\xi (\tilde{\mu})=0$. An example of such
manifolds is presented.
\end{abstract}

\maketitle

\section{\textbf{Introduction}}

\label{introduction}

The study of paracontact geometry was introduced by Kaneyuki and Williams in 
\cite{kaneyuki1}. A systematic study of paracontact metric manifolds started
with the paper \cite{Za}, were the Levi-Civita connection, the curvature and
a canonical connection (analogue to the Tanaka Webster connection of the
contact metric case) of a paracontact metric manifold have been described.
However such structures were studied before \cite{RoscVanh}, \cite{BuchRosc}%
, \cite{BuchRosc2}. Note also \cite{Bejan}. These authors called such
structures almost para-coHermitian. The curvature identities for different
classes of almost paracontact metric manifolds were obtained e.g. in \cite%
{DACKO}, \cite{Welyczko}, \cite{Za}. The importance of paracontact geometry,
and in particular of para-Sasakian geometry, has been pointed out especially
in the last years by several papers highlighting the interplays with the
theory of para-K\"{a}hler manifolds and its role in pseudo-Riemannian
geometry and mathematical physics (cf. e.g. \cite{alekseevski1},\cite%
{alekseevski2},\cite{Biz},\cite{cortes1},\cite{cortes2}). Paracontact metric
manifolds have been studied under several different points of view. The case
when the Reeb vector field satisfies a nullity condition was studied in \cite%
{Biz}. The study of three-dimensional paracontact metric $(\tilde{\kappa},%
\tilde{\mu},\tilde{\nu})$-spaces were obtained in \cite{kupmur}.

A remarkable class of paracontact metric manifolds $(M^{2n+1},\tilde{\varphi}%
,\xi ,\eta ,\tilde{g})~$is that of paracontact metric $(\tilde{\kappa},%
\tilde{\mu})$-spaces, which satisfy the nullity condition%
\begin{equation}
\tilde{R}(X,Y)\xi =\tilde{\kappa}\left( \eta \left( Y\right) X-\eta \left(
X\right) Y\right) +\tilde{\mu}(\eta \left( Y\right) \tilde{h}X-\eta \left(
X\right) \tilde{h}Y),  \label{paranullity}
\end{equation}%
for all $X,Y$ vector fields on $M$, where $\tilde{\kappa}$ and $\tilde{\mu}$
are constants and $\tilde{h}=\frac{1}{2}{\mathcal{L}}_{\xi }\tilde{\varphi}$.

This new class of pseudo-Riemannian manifolds was introduced in \cite{MOTE}.
In \cite{Biz}, the authors showed that while the values of $\tilde{\kappa}$
and $\tilde{\mu}$ change the form of (\ref{paranullity}) remains unchanged
under $\mathcal{D}$-homothetic deformations. There are differences between a
contact metric $(\kappa ,\mu )$-space $(M^{2n+1},\varphi ,\xi ,\eta ,g)$ and
a paracontact metric $(\tilde{\kappa},\tilde{\mu})$-space $(M^{2n+1},\tilde{%
\varphi},\xi ,\eta ,\tilde{g})$. Namely, unlike in the contact Riemannian
case, a paracontact $(\tilde{\kappa},\tilde{\mu})$-manifold such that $%
\tilde{\kappa}=-1$ in general is not para-Sasakian. In fact, there are
paracontact $(\tilde{\kappa},\tilde{\mu})$-manifolds such that $\tilde{h}%
^{2}=0$ (which is equivalent to take $\tilde{\kappa}=-1$) but with $\tilde{h}%
\neq 0$. For $5$-dimensional, Cappelletti Montano and Di Terlizzi gave the
first example of paracontact metric $(-1,2)$-space $(M^{2n+1},\tilde{\varphi}%
,\xi ,\eta ,\tilde{g})$ with $\tilde{h}^{2}=0$ but $\tilde{h}\neq 0$ in \cite%
{MOTE} and then Cappelletti Montano et al. gave the first paracontact metric
structures defined on the tangent sphere bundle and constructed an example
with arbitrary $n$ in \cite{Biz}. Later, for $3$-dimensional, the first
numerical example was given in \cite{kupmur}. Another important difference
with the contact Riemannian case, due to the non-positive definiteness of
the metric, is that while for contact metric $(\kappa ,\mu )$-spaces the
constant $\kappa $ can not be greater than $1$, paracontact metric $(\tilde{%
\kappa},\tilde{\mu})$-space has no restriction for the constants $\tilde{%
\kappa}$ and $\tilde{\mu}$.

Koufogiorgos and Tsichlias \cite{KO2} gave a local classification of a
non-Sasakian generalized $(\kappa ,%
%TCIMACRO{\U{3bc} }%
%BeginExpansion
\mu
%EndExpansion
)$-contact metric manifold with $\xi (\mu )=0$. This has been our motivation
for studying generalized $(\tilde{\kappa},\tilde{\mu})$-paracontact metric
manifolds with $\xi (\tilde{\mu})=0$. We would like to emphasize that, as
will be shown in this paper, the class of generalized $(\tilde{\kappa},%
\tilde{\mu})$-paracontact metric manifolds with $\xi (\tilde{\mu})=0$ is
much more different than the class of generalized $(\kappa ,%
%TCIMACRO{\U{3bc} }%
%BeginExpansion
\mu
%EndExpansion
)$-contact metric manifolds with $\xi (\mu )=0$.

By a generalized $(\tilde{\kappa},\tilde{\mu})$-paracontact metric manifold
we mean a $3$-dimensional paracontact metric manifold satisfying (\ref%
{paranullity}) where $\tilde{\kappa}$ and $\tilde{\mu}$ are non constant
smooth functions. In the special case, where $\tilde{\kappa}$ and $\tilde{\mu%
}$ are constant, then $(M^{2n+1},\tilde{\varphi},\xi ,\eta ,\tilde{g})$ is
called a $(\tilde{\kappa},\tilde{\mu})$-paracontact metric manifold.

In \cite{kupmur}, Kupeli Erken and Murathan proved the existence of a new
class of paracontact metric manifolds: the so called $(\tilde{\kappa},\tilde{%
%TCIMACRO{\U{3bc}}%
%BeginExpansion
\mu%
%EndExpansion
},\tilde{\nu})$-paracontact metric manifolds. Such a manifold $M$ is defined
through the condition%
\begin{eqnarray}
\tilde{R}(X,Y)\xi  &=&\tilde{\kappa}\left( \eta \left( Y\right) X-\eta
\left( X\right) Y\right) +\tilde{\mu}(\eta \left( Y\right) \tilde{h}X-\eta
\left( X\right) \tilde{h}Y)  \label{k,m,v} \\
&&+\tilde{\nu}(\eta \left( Y\right) \tilde{\varphi}\tilde{h}X-\eta \left(
X\right) \tilde{\varphi}\tilde{h}Y),  \notag
\end{eqnarray}%
where $\tilde{\kappa},\tilde{%
%TCIMACRO{\U{3bc}}%
%BeginExpansion
\mu%
%EndExpansion
}$ and $\tilde{\nu}$\ are smooth functions on $M.$ Furthermore, it is proved
that these manifolds exist only in the dimension $3$, whereas such a
manifold in dimension greater than $3$ is a $(\tilde{\kappa},\tilde{\mu})$%
-paracontact metric manifold.

The paper is organized in the following way. In Section $2$, we will report
some basic information about paracontact metric manifolds. Some results
about generalized $(\tilde{\kappa}\neq -1,\tilde{\mu})$-paracontact metric
manifolds will be given in Section $3$. In Section $4$, we shall locally
classify generalized $(\tilde{\kappa}\neq -1,\tilde{\mu})$-paracontact
metric manifold with $\xi (\tilde{\mu})=0$ (i.e. the function $\tilde{\mu}$
is constant along the integral curves of the characteristic vector field $%
\xi $). We will prove that we can construct in $R^{3}$ two families of such
manifolds. All manifolds are assumed to be connected.

\section{Preliminaries}

\label{preliminaries}

The aim of this section is to report some basic facts about paracontact
metric manifolds. All manifolds are assumed to be connected and smooth. We
may refer to \cite{kaneyuki1}, \cite{Za} and references therein for more
information about paracontact metric geometry.

An $(2n+1)$-dimensional smooth manifold $M$ is said to have an \emph{almost
paracontact structure} if it admits a $(1,1)$-tensor field $\tilde{\varphi}$%
, a vector field $\xi $ and a $1$-form $\eta $ satisfying the following
conditions:

\begin{enumerate}
\item[(i)] $\eta(\xi )=1$, \ $\tilde{\varphi}^{2}=I-\eta \otimes \xi$,

\item[(ii)] the tensor field $\tilde{\varphi}$ induces an almost paracomplex
structure on each fibre of ${\mathcal{D}}=\ker(\eta)$, i.e. the $\pm 1$%
-eigendistributions, ${\mathcal{D}}^{\pm}:={\mathcal{D}}_{\tilde\varphi}(\pm
1)$ of $\tilde\varphi$ have equal dimension $n$.
\end{enumerate}

From the definition it follows that $\tilde{\varphi}\xi =0$, $\eta \circ 
\tilde{\varphi}=0$ and the endomorphism $\tilde{\varphi}$ has rank $2n$.
When the tensor field $N_{\tilde{\varphi}}:=[\tilde{\varphi},\tilde{\varphi}%
]-2d\eta \otimes \xi $ vanishes identically the almost paracontact manifold
is said to be \emph{normal}. If an almost paracontact manifold admits a
pseudo-Riemannian metric $\tilde{g}$ such that 
\begin{equation}
\tilde{g}(\tilde{\varphi}X,\tilde{\varphi}Y)=-\tilde{g}(X,Y)+\eta (X)\eta
(Y),  \label{G METRIC}
\end{equation}%
for all $X,Y\in \Gamma (TM)$, then we say that $(M,\tilde{\varphi},\xi ,\eta
,\tilde{g})$ is an \textit{almost paracontact metric manifold}. Notice that
any such a pseudo-Riemannian metric is necessarily of signature $(n+1,n)$.
For an almost paracontact metric manifold, there always exists an orthogonal
basis $\{X_{1},\ldots ,X_{n},Y_{1},\ldots ,Y_{n},\xi \}$ such that $\tilde{g}%
(X_{i},X_{j})=\delta _{ij}$, $\tilde{g}(Y_{i},Y_{j})=-\delta _{ij}$ and $%
Y_{i}=\tilde{\varphi}X_{i}$, for any $i,j\in \left\{ 1,\ldots ,n\right\} $.
Such basis is called a $\tilde{\varphi}$-basis.

If in addition $d\eta (X,Y)=\tilde{g}(X,\tilde{\varphi}Y)$ for all vector
fields $X,Y$ on $M,$ $(M,\tilde{\varphi},\xi ,\eta ,\tilde{g})$ is said to
be a \emph{paracontact metric manifold}. In a paracontact metric manifold
one defines a symmetric, trace-free operator $\tilde{h}:=\frac{1}{2}{%
\mathcal{L}}_{\xi }\tilde{\varphi}$. It is known \cite{Za} that $\tilde{h}$
anti-commutes with $\tilde{\varphi}$ and satisfies $\tilde{h}\xi =0,$ tr$%
\tilde{h}=$tr$\tilde{h}\tilde{\varphi}=0$ and 
\begin{equation}
\tilde{\nabla}\xi =-\tilde{\varphi}+\tilde{\varphi}\tilde{h},
\label{nablaxi}
\end{equation}%
where $\tilde{\nabla}$ is the Levi-Civita connection of the
pseudo-Riemannian manifold $(M,\tilde{g})$.

Moreover $\tilde{h}\equiv 0$ if and only if $\xi $ is a Killing vector field
and in this case $(M,\tilde{\varphi},\xi ,\eta ,\tilde{g})$ is said to be a 
\emph{K-paracontact manifold}. A normal paracontact metric manifold is
called a \textit{para-Sasakian manifold}. Also in this context the
para-Sasakian condition implies the $K$-paracontact condition and the
converse holds only in dimension $3$. \ We also recall that any
para-Sasakian manifold satisfies 
\begin{equation}
\tilde{R}(X,Y)\xi =-(\eta (Y)X-\eta (X)Y)  \label{Pasa}
\end{equation}%
so that it is a $(\tilde{\kappa},\tilde{\mu})$-space with $\tilde{\kappa}%
=-1. $ To note that, differently from the contact metric case, condition (%
\ref{Pasa}) is necessary but not sufficient for a paracontact metric
manifold to be para-Sasakian. This fact was already pointed out in \cite{Biz}%
.

As a natural generalization of the above para-Sasakian condition one can
consider contact metric manifolds satisfying (\ref{paranullity}) for some
real numbers $\kappa $ and $\mu .$ Paracontact metric manifolds satisfying (%
\ref{paranullity}) are called $(\tilde{\kappa},\tilde{\mu})$-paracontact
metric manifold. $(\tilde{\kappa},\tilde{\mu})$-paracontact metric manifold%
\textit{\ }were introduced and deeply studied in \cite{MOTE} and \cite{Biz}.

By a generalized $(\tilde{\kappa},\tilde{\mu})$-paracontact metric manifold
we mean a $3$-dimensional paracontact metric manifold satisfying (\ref%
{paranullity}) where $\tilde{\kappa}$ and $\tilde{\mu}$ are non constant
smooth functions. In the special case, where $\tilde{\kappa}$ and $\tilde{\mu%
}$ are constant, then $(M^{2n+1},\tilde{\varphi},\xi ,\eta ,\tilde{g})$ is
called a $(\tilde{\kappa},\tilde{\mu})$-paracontact metric manifold.

Generalized $(\tilde{\kappa},\tilde{\mu})$-paracontact metric manifolds were
studied by Kupeli Erken and Murathan in \cite{kupmur}. A recent
generalization of the $(\tilde{\kappa},\tilde{\mu})$-paracontact metric
manifold is given by the following definition.

\begin{definition}
\label{kmvvv}A $2n+1$-dimensional paracontact metric $(\tilde{\kappa},\tilde{%
\mu},\tilde{\nu})$-manifold is a paracontact metric manifold for which the
curvature tensor field satisfies 
\begin{equation}
\tilde{R}(X,Y)\xi =\tilde{\kappa}(\eta (Y)X-\eta (X)Y)+\tilde{\mu}(\eta (Y)%
\tilde{h}X-\eta (X)\tilde{h}Y)+\tilde{\nu}(\eta (Y)\tilde{\varphi}\tilde{h}%
X-\eta (X)\tilde{\varphi}\tilde{h}Y),  \label{PARAKMUvu}
\end{equation}%
for all $X,Y\in \Gamma (TM)$, where $\tilde{\kappa},\tilde{\mu},\tilde{\nu}$
are smooth functions on $M$.
\end{definition}

A paracontact metric manifold whose characteristic vector field $\xi $ is a
harmonic vector field is called an $H$-paracontact manifold. Moreover,\
Kupeli Erken and Murathan \cite{kupmur} proved \ that $\xi $ is a harmonic
vector field if and only if $\xi $ is an eigenvector of the Ricci operator.
In the same study, they characterized the 3-dimensional $H$-paracontact
metric manifolds in terms of $(\tilde{\kappa},\tilde{\mu},\tilde{\nu})$%
-paracontact metric manifolds. In particular, they proved the following
theorem.

\begin{theorem}
\cite{kupmur} \label{teoh}\textit{Let }$(M,\tilde{\varphi},\xi ,\eta ,\tilde{%
g})$\textit{\ be a }$3$\textit{-dimensional paracontact metric manifold. If
\ the characteristic vector field }$\xi $\textit{\ is harmonic map then the
paracontact metric }$(\tilde{\kappa},\tilde{\mu},\tilde{\nu})$\textit{%
-manifold always exists on every open and dense subset of }$M.$\textit{\
Conversely, if }$M$\textit{\ is a paracontact metric }$(\tilde{\kappa},%
\tilde{\mu},\tilde{\nu})$\textit{-manifold then the characteristic vector
field }$\xi $\textit{\ is harmonic map.}
\end{theorem}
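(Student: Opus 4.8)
The plan is to obtain both implications from the tension field of the characteristic vector field regarded as a map $\xi\colon(M,\tilde g)\to T_{1}M$ into the unit tangent bundle. Its vanishing (harmonicity) is equivalent to two tensorial identities: a \emph{vertical} one, which is exactly the harmonic vector field equation and, by the criterion recalled above from \cite{kupmur}, amounts to $\xi$ being an eigenvector of the Ricci operator, $\tilde Q\xi=a\,\xi$ for a smooth function $a$; and a \emph{horizontal} one, a curvature expression of the shape $\sum_{i}\varepsilon_{i}\,\tilde R(\xi,\tilde\nabla_{e_{i}}\xi)e_{i}=0$, the sum being taken over a $\tilde\varphi$-basis $\{e,\tilde\varphi e,\xi\}$ with $\tilde g(e,e)=1$, $\tilde g(\tilde\varphi e,\tilde\varphi e)=-1$, $\tilde g(\xi,\xi)=1$. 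Throughout, the two computational tools are the formula $\tilde\nabla_{X}\xi=-\tilde\varphi X+\tilde\varphi\tilde hX=\tilde\varphi(\tilde h-I)X$ from (\ref{nablaxi}) and, since $\dim M=3$, the identity writing the curvature in terms of the Ricci operator $\tilde Q$,
\[
\tilde R(X,Y)Z=\tilde g(Y,Z)\tilde QX-\tilde g(X,Z)\tilde QY+\tilde g(\tilde QY,Z)X-\tilde g(\tilde QX,Z)Y-\tfrac{\tilde r}{2}\bigl(\tilde g(Y,Z)X-\tilde g(X,Z)Y\bigr),
\]
where $\tilde r$ is the scalar curvature.

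For the converse implication, assume (\ref{PARAKMUvu}) holds. Contracting it over the $\tilde\varphi$-basis and using $\tilde h\xi=0$, $\operatorname{tr}\tilde h=\operatorname{tr}(\tilde h\tilde\varphi)=0$ and $\sum_{i}\varepsilon_{i}\eta(e_{i})e_{i}=\xi$ gives $\tilde Q\xi=2\tilde\kappa\,\xi$, so the vertical condition holds. For the horizontal one, one substitutes $\tilde\nabla_{e_{i}}\xi=\tilde\varphi(\tilde h-I)e_{i}$ (which annihilates the $e_{i}=\xi$ term) and transports (\ref{PARAKMUvu}) into the sum via the pair symmetry $\tilde g(\tilde R(A,B)C,D)=\tilde g(\tilde R(C,D)A,B)$; the anticommutation $\tilde h\tilde\varphi=-\tilde\varphi\tilde h$, the relations $\tilde h\xi=0$, $\tilde\varphi^{2}=I-\eta\otimes\xi$ and the vanishing traces then make the sum collapse to $0$. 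Hence $\xi$ is a harmonic map.

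For the direct implication, assume $\xi$ is a harmonic map; in particular it is a harmonic vector field, so $\tilde Q\xi=a\,\xi$. Setting $Z=\xi$ in the dimension-$3$ identity and using $\tilde g(\tilde QY,\xi)=\tilde g(Y,\tilde Q\xi)=a\,\eta(Y)$ (self-adjointness of $\tilde Q$) gives $\tilde R(X,Y)\xi=\eta(Y)\tilde PX-\eta(X)\tilde PY$ with $\tilde P:=\tilde Q+(a-\tfrac{\tilde r}{2})I$; in particular $\tilde R(X,\xi)\xi=\tilde PX$ for $X\in\mathcal D:=\ker\eta$, which carries the content. Comparing with (\ref{PARAKMUvu}), it now suffices to produce, on an open dense subset, smooth functions with $\tilde P|_{\mathcal D}=\tilde\kappa\,I+\tilde\mu\,\tilde h+\tilde\nu\,\tilde\varphi\tilde h$. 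Decompose $M=\mathcal U\cup\mathcal V$, where $\mathcal U=\{p:\tilde h_{p}\neq0\}$ and $\mathcal V=\operatorname{Int}\{p:\tilde h_{p}=0\}$; this union is open and dense. On $\mathcal V$ the manifold is $K$-paracontact, hence (being $3$-dimensional) para-Sasakian, and (\ref{Pasa}) exhibits it as a $(\tilde\kappa,\tilde\mu,\tilde\nu)$-manifold with $\tilde\kappa=-1$. On $\mathcal U$, $\tilde g$ restricted to $\mathcal D$ has signature $(1,1)$, $\tilde\varphi|_{\mathcal D}$ is a skew involution, $\tilde h|_{\mathcal D}$ is symmetric and trace-free, and (as $\tilde\varphi$ is skew, $\tilde h$ symmetric and they anticommute) $\tilde\varphi\tilde h|_{\mathcal D}$ is symmetric and trace-free as well; since $\tilde P|_{\mathcal D}$ is symmetric and the symmetric endomorphisms of $(\mathcal D,\tilde g|_{\mathcal D})$ form a $3$-dimensional space, spanned by $\{I,\tilde h,\tilde\varphi\tilde h\}$ whenever $\tilde h^{2}\neq0$ (equivalently $\tilde h|_{\mathcal D}$ invertible, so that $I,\tilde h,\tilde\varphi\tilde h$ are independent), the functions $\tilde\kappa,\tilde\mu,\tilde\nu$ are recovered by fixed linear-algebraic formulas ($\tilde\kappa=\tfrac12\operatorname{tr}(\tilde P|_{\mathcal D})$, and $\tilde\mu,\tilde\nu$ by solving a $2\times2$ linear system whose coefficient matrix is the Gram matrix of $\tilde h,\tilde\varphi\tilde h$ for the pairing $\langle A,B\rangle=\operatorname{tr}(AB)$), hence smooth there.

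The real obstacle is the locus $\{\tilde h\neq0,\ \tilde h^{2}=0\}$ inside $\mathcal U$: there $\tilde h$ has rank $1$, $\operatorname{Im}\tilde h$ is a null line and an eigenline of $\tilde\varphi$, so $\tilde\varphi\tilde h=\pm\tilde h$ and $\{I,\tilde h,\tilde\varphi\tilde h\}$ spans only a $2$-dimensional space, too small to reach a general symmetric $\tilde P|_{\mathcal D}$; this is exactly the point where the harmonic \emph{vector field} hypothesis alone is insufficient. Here the horizontal component of the harmonic-map equation must be used: inserting $\tilde\nabla_{X}\xi=\tilde\varphi(\tilde h-I)X$ and the already simplified form of $\tilde R(\cdot,\cdot)\xi$ into $\sum_{i}\varepsilon_{i}\tilde R(\xi,\tilde\nabla_{e_{i}}\xi)e_{i}=0$ yields the extra algebraic constraint that pins $\tilde P|_{\mathcal D}$ inside $\operatorname{span}\{I,\tilde h\}=\operatorname{span}\{I,\tilde h,\tilde\varphi\tilde h\}$, from which $\tilde\mu$ and $\tilde\nu$ can be read off (non-uniquely, which does no harm). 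Patching $\mathcal U$ and $\mathcal V$ then yields the asserted $(\tilde\kappa,\tilde\mu,\tilde\nu)$-structure on an open dense subset. Beyond this degenerate locus, the remaining technical points are the smoothness of the coefficient functions and the compatibility of the local descriptions across the pieces of the decomposition; the bulk of the actual work is the case analysis according to the algebraic type of $\tilde h|_{\mathcal D}$.
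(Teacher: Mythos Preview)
The present paper does not prove this theorem: it is quoted from \cite{kupmur} and stated without proof here, so there is no argument in this paper to compare your proposal against.

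A remark on your outline nonetheless. The paragraph immediately preceding the theorem characterizes $H$-paracontact manifolds via the \emph{harmonic vector field} condition $\tilde Q\xi=a\,\xi$, and the theorem is presented as the $3$-dimensional characterization of that class; the phrase ``harmonic map'' appears to be used loosely for this condition, not for the full harmonic-map equation on $\xi\colon M\to T_1M$ with its separate horizontal part. If that reading is correct, then the extra horizontal identity you invoke on the degenerate locus $\{\tilde h\neq 0,\ \tilde h^{2}=0\}$ (your $\mathfrak h_2$ case) is not part of the hypothesis, and your argument there has a gap: you would need to show directly, from $\tilde Q\xi=a\,\xi$ and the paracontact identities alone, that $\tilde P|_{\mathcal D}$ lands in $\operatorname{span}\{I,\tilde h\}$. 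In \cite{kupmur} this is handled by an explicit computation in a pseudo-orthonormal frame adapted to the $\mathfrak h_2$ type rather than by an abstract tension-field argument. The ``open and dense'' qualifier in the statement reflects the need to pass to subsets where the algebraic type of $\tilde h$ is locally constant so that smooth adapted frames and hence smooth $\tilde\kappa,\tilde\mu,\tilde\nu$ exist; it is not covering a failure of the representation on an open $\mathfrak h_2$ region. To settle the degenerate case you should consult the frame computations in \cite{kupmur} directly.
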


It is shown that condition (\ref{PARAKMUvu}) is meaningless for $\kappa \neq
-1$ in dimension higher than three, because the functions $\tilde{\kappa},%
\tilde{\mu}$ are constants and $\tilde{\nu}$ is the zero function.

\bigskip Given a paracontact metric structure $(\tilde{\varphi},\xi ,\eta ,%
\tilde{g})$ and $\alpha >0$, the change of structure tensors 
\begin{equation}
\bar{\eta}=\alpha \eta ,\text{ \ \ }\bar{\xi}=\frac{1}{\alpha }\xi ,\text{ \
\ }\bar{\varphi}=\tilde{\varphi},\text{ \ \ }\bar{g}=\alpha \tilde{g}+\alpha
(\alpha -1)\eta \otimes \eta  \label{DHOMOTHETIC}
\end{equation}%
is called a \emph{${\mathcal{D}}_{\alpha }$-homothetic deformation}. One can
easily check that the new structure $(\bar{\varphi},\bar{\xi},\bar{\eta},%
\bar{g})$ is still a paracontact metric structure \cite{Za}. We now show
that while ${\mathcal{D}}_{\alpha }$-homothetic deformations destroy
conditions like $\tilde{R}_{XY}\xi =0$, they preserve the class of
paracontact $(\tilde{\kappa},\tilde{\mu})$-spaces.

Kupeli Erken and Murathan analyzed the different possibilities for the
tensor field $\tilde{h}$ in \cite{kupmur}. If $\tilde{h}$ has 
\begin{equation}
\left( 
\begin{array}{ccc}
\tilde{\lambda} & 0 & 0 \\ 
0 & -\tilde{\lambda} & 0 \\ 
0 & 0 & 0%
\end{array}%
\right)  \label{A1}
\end{equation}%
the form (\ref{A1}) respect to local orthonormal $\tilde{\varphi}$-basis $%
\{X,\tilde{\varphi}X,\xi \}$, the authors called the operator $\tilde{h}$ is
of $\mathfrak{h}_{1}$ \textit{type}$.$

If the tensor $\tilde{h}$ has the form $\left( 
\begin{array}{ccc}
0 & 0 & 0 \\ 
1 & 0 & 0 \\ 
0 & 0 & 0%
\end{array}%
\right) $ relative a pseudo orthonormal basis $\{e_{1},e_{2},e_{3}\}$. In
this case, the authors called $\tilde{h}$ is of $\mathfrak{h}_{2}$ \textit{%
type}.

If the matrix form of $\tilde{h}$ is$\ $given$\ $by%
\begin{equation}
\tilde{h}=\left( 
\begin{array}{ccc}
0 & -\tilde{\lambda} & 0 \\ 
\tilde{\lambda} & 0 & 0 \\ 
0 & 0 & 0%
\end{array}%
\right)  \label{A3}
\end{equation}%
with respect to local orthonormal basis $\{X,\tilde{\varphi}X,\xi \}.$ In
this case, the authors said that $\tilde{h}$ is of $\mathfrak{h}_{3}$ 
\textit{type}.

\section{Generalized $(\tilde{\protect\kappa}\neq -1,\tilde{\protect\mu})$%
-paracontact metric manifolds}

\label{Generalized (k,m)-paracontact metric manifolds}

In this section, we will give some basic facts about generalized $(\tilde{%
\kappa}\neq -1,\tilde{\mu})$-paracontact metric manifolds.

\begin{lemma}
\label{L1}Let $(M,\tilde{\varphi},\xi ,\eta ,\tilde{g})~$\ be a generalized $%
(\tilde{\kappa}\neq -1,\tilde{\mu})$-paracontact metric manifold. The
following identities hold:%
\begin{equation}
h^{2}=(1+\tilde{\kappa})\tilde{\varphi}^{2}\text{, \ \ }  \label{hsq}
\end{equation}%
\begin{equation}
\xi (\tilde{\kappa})=0\text{, }  \label{zetak}
\end{equation}%
\begin{equation}
\tilde{Q}\xi =2\tilde{\kappa}\xi ,  \label{Ricci}
\end{equation}%
\begin{equation}
\tilde{Q}=(\frac{\tau }{2}-\tilde{\kappa})I+(-\frac{\tau }{2}+3\tilde{\kappa}%
)\eta \otimes \xi +\tilde{\mu}\tilde{h},\text{ \ \ }\tilde{\kappa}\neq -1,
\label{q1,q3}
\end{equation}%
where $\tilde{Q}$ is the Ricci operator of $M$, $\tau $ denotes scalar
curvature of $M$ and $\tilde{l}=\tilde{R}(.,\xi )\xi $.
\end{lemma}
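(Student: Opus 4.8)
The plan is to derive these identities in the order listed, since each one feeds into the next. The starting point is the defining relation \eqref{paranullity} for $\tilde{R}(X,Y)\xi$ together with the structural identity \eqref{nablaxi}, namely $\tilde{\nabla}\xi = -\tilde{\varphi} + \tilde{\varphi}\tilde{h}$. The first step is to compute $\tilde{R}(X,\xi)\xi$ directly from the Levi-Civita connection: differentiating \eqref{nablaxi} along $X$ and using $\tilde{h}\xi = 0$, $\tilde{\varphi}\xi = 0$, one gets an expression for $\tilde{R}(X,\xi)\xi$ in terms of $\tilde{h}^2$, $\tilde{h}$, and the covariant derivatives of $\tilde{h}$ and $\tilde{\varphi}$ contracted with $\xi$. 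Comparing this with \eqref{paranullity} evaluated at $Y = \xi$, which gives $\tilde{R}(X,\xi)\xi = \tilde{\kappa}(X - \eta(X)\xi) + \tilde{\mu}\,\tilde{h}X = -\tilde{\kappa}\,\tilde{\varphi}^2 X + \tilde{\mu}\,\tilde{h}X$, and separating the part of the identity that is ``algebraic'' in $\tilde{h}$ from the part involving derivatives, the symmetric $\tilde\varphi$-anticommuting piece yields $\tilde{h}^2 = (1+\tilde{\kappa})\tilde{\varphi}^2$, which is \eqref{hsq}. (This is the paracontact analogue of the well-known Blair--Koufogiorgos--Papantoniou computation; the sign in front of $(1+\tilde\kappa)$ reflects $\tilde\varphi^2 = I - \eta\otimes\xi$ rather than $-I + \eta\otimes\xi$.)

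For \eqref{zetak}, I would apply $\xi$ to the identity \eqref{hsq}: since $\tilde{h}^2 = (1+\tilde\kappa)\tilde\varphi^2 = (1+\tilde\kappa)(I - \eta\otimes\xi)$, differentiating along $\xi$ and pairing against a vector $X \in \mathcal{D}$ with $X = \tilde\varphi^2 X$, the left side involves $(\tilde{\nabla}_\xi \tilde{h})\tilde{h} + \tilde{h}(\tilde{\nabla}_\xi \tilde{h})$. Using the known fact (from \cite{kupmur} or derivable from \eqref{nablaxi}) that $\tilde{\nabla}_\xi \tilde{h}$ anti-commutes with $\tilde{h}$ on $\mathcal{D}$ — more precisely that $\tilde\varphi \tilde{h}$ is symmetric and $(\tilde\nabla_\xi \tilde h)$ has a controlled form — the trace or an appropriate contraction kills the derivative term and leaves $\xi(\tilde\kappa)\,\tilde\varphi^2 = 0$, hence $\xi(\tilde\kappa) = 0$. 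Alternatively, and perhaps more cleanly, one can use the second Bianchi identity on \eqref{paranullity}, contract suitably, and read off $\xi(\tilde\kappa) = 0$; I expect one of these two routes to work with only routine manipulation once \eqref{hsq} is in hand.

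For \eqref{Ricci}, the Ricci operator applied to $\xi$ is computed as $\tilde{Q}\xi = \sum_i \varepsilon_i \tilde{R}(\xi, E_i)E_i$ over a $\tilde\varphi$-basis $\{E_i\}$ with signs $\varepsilon_i$; using the symmetry $\tilde{g}(\tilde{R}(\xi,E_i)E_i,\xi)$ versus $\tilde{g}(\tilde{R}(E_i,\xi)\xi,E_i)$ and plugging in \eqref{paranullity} with $Y=\xi$, together with $\mathrm{tr}\,\tilde{h} = 0$ and $\mathrm{tr}\,\tilde\varphi^2 = 2n$ (here $n=1$, so $\mathrm{tr}\,\tilde\varphi^2 = 2$), one gets $\tilde{Q}\xi = 2\tilde\kappa\,\xi$. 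Finally, \eqref{q1,q3} is the genuinely three-dimensional part: in dimension $3$ the curvature tensor is completely determined by the Ricci operator via $\tilde{R}(X,Y)Z = $ (the standard 3-dimensional formula with $\tilde{g}$ and $\tilde{Q}$ and scalar curvature $\tau$). Substituting this formula into \eqref{paranullity} with $Z = \xi$ and using \eqref{Ricci}, one matches coefficients of $\eta(Y)X$, $\eta(X)Y$, $\eta(Y)\tilde{h}X$, etc., to solve for $\tilde{Q}$; because $\tilde{Q}\xi$ is already pinned down and $\tilde{Q}$ is symmetric and commutes with the decomposition $TM = \mathcal{D}\oplus\mathbb{R}\xi$ up to the $\tilde\mu\tilde h$ term, one recovers $\tilde{Q} = (\tfrac{\tau}{2}-\tilde\kappa)I + (-\tfrac{\tau}{2}+3\tilde\kappa)\eta\otimes\xi + \tilde\mu\tilde{h}$, valid precisely when $\tilde\kappa \neq -1$ (the hypothesis guarantees $\tilde{h}$ is non-degenerate on $\mathcal{D}$ when $\tilde\mu \neq 0$, but more importantly it is used to ensure the eigenspace structure needed to invert the coefficient relations).

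The main obstacle I anticipate is \eqref{zetak}: isolating $\xi(\tilde\kappa)$ requires knowing how $\tilde{\nabla}_\xi\tilde{h}$ and $\tilde{\nabla}_\xi\tilde{\kappa}$ interact, and one must be careful because, unlike the contact Riemannian case, $\tilde{h}^2$ can be zero-or-not independently of $\tilde{h}$, and signs/signature issues in the pseudo-Riemannian setting make the standard contact-metric arguments not transfer verbatim. The cleanest path is probably to differentiate \eqref{hsq} covariantly in the $\xi$ direction and contract with $\tilde{h}$ itself (using that $\tilde{h}$ is symmetric and $\tilde{h}\xi=0$), so that the cross terms $(\tilde\nabla_\xi\tilde h)\tilde h + \tilde h(\tilde\nabla_\xi\tilde h)$ contribute a total trace that vanishes, leaving $2\,\xi(\tilde\kappa)\,\mathrm{tr}(\tilde\varphi^2\cdot\text{something})$, which forces $\xi(\tilde\kappa)=0$ provided $1+\tilde\kappa\neq 0$ somewhere — but since $\tilde\kappa\neq -1$ is assumed throughout, this is fine. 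All remaining steps are bookkeeping with the structure equations and the 3-dimensional curvature formula.
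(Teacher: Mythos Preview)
Your sketch is sound and follows the standard line of argument for these identities. The paper itself does not give a self-contained proof here: it simply refers the reader to \cite{kupmur} (Lemma~3.2 for \eqref{hsq}--\eqref{Ricci}, and Lemmas~4.4 and~4.14 for \eqref{q1,q3}), so your proposal is in fact more detailed than what the paper provides.

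One point worth sharpening in your argument for \eqref{zetak}: the claim that $(\tilde\nabla_\xi\tilde h)\tilde h + \tilde h(\tilde\nabla_\xi\tilde h)$ vanishes is correct, but the mechanism is not a trace argument --- tracing both sides of the differentiated identity yields only the tautology $\xi(\mathrm{tr}\,\tilde h^2) = \xi(\tilde\kappa)\,\mathrm{tr}\,\tilde\varphi^2$. What actually makes it work is the explicit form $\tilde\nabla_\xi\tilde h = \tilde\mu\,\tilde h\tilde\varphi$ (derivable from the nullity condition together with $\tilde\nabla_\xi\tilde\varphi = 0$); combining this with the anti-commutation $\tilde h\tilde\varphi = -\tilde\varphi\tilde h$ and then $\tilde h^2 = (1+\tilde\kappa)\tilde\varphi^2$ gives $(\tilde\nabla_\xi\tilde h)\tilde h + \tilde h(\tilde\nabla_\xi\tilde h) = \tilde\mu(\tilde h^2\tilde\varphi - \tilde\varphi\tilde h^2) = 0$ pointwise, whence $\xi(\tilde\kappa)\tilde\varphi^2 = 0$ and $\xi(\tilde\kappa) = 0$. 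Your Bianchi-identity alternative would also work but is heavier. The rest of your outline (direct computation of $\tilde l$ for \eqref{hsq}, the basis-trace for \eqref{Ricci}, and the three-dimensional curvature formula for \eqref{q1,q3}) is exactly what the cited reference does.
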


\begin{proof}
For the proof of (\ref{hsq})-(\ref{Ricci}) are similar to that of \ [\cite%
{kupmur}, Lemma 3.2]. The relation (\ref{q1,q3}) is an immediate consequence
of [\cite{kupmur}, Lemma 4.4 and Lemma 4.14].
\end{proof}

\begin{lemma}
\label{L2}Let $(M,\tilde{\varphi},\xi ,\eta ,\tilde{g})$ be a generalized $(%
\tilde{\kappa}\neq -1,\tilde{\mu})$-paracontact metric manifold. Then, for
any point $P\in M$, with $\tilde{\kappa}(P)>-1$ there exist a neighborhood $U
$ of $P$ and an $\tilde{h}$-frame on $U$, i.e. orthonormal vector fields $%
\xi ,$ $X$, $\tilde{\varphi}X$, defined on $U$, such that 
\begin{equation}
\tilde{h}X=\tilde{\lambda}X,\text{ \ \ }\tilde{h}\tilde{\varphi}X=-\tilde{%
\lambda}\tilde{\varphi}X\text{, \ \ }h\xi =0\text{, \ \ }\tilde{\lambda}=%
\sqrt{1+\tilde{\kappa}}  \label{hfrme}
\end{equation}%
at any point $q\in U$. Moreover, setting $A=X\tilde{\lambda}$ and $B=\tilde{%
\varphi}X\tilde{\lambda}$ on $U$ the following formulas are true :%
\begin{equation}
\tilde{\nabla}_{X}\xi =(\tilde{\lambda}-1)\tilde{\varphi}X,\text{ }\tilde{%
\nabla}_{\tilde{\varphi}X}\xi =-(\tilde{\lambda}+1)X,  \label{eq1}
\end{equation}%
\begin{equation}
\tilde{\nabla}_{\xi }X=-\frac{\tilde{\mu}}{2}\tilde{\varphi}X,\text{ }\tilde{%
\nabla}_{\xi }\tilde{\varphi}X=-\frac{\tilde{\mu}}{2}X,  \label{eq2}
\end{equation}%
\begin{equation}
\tilde{\nabla}_{X}X=-\frac{B}{2\tilde{\lambda}}\tilde{\varphi}X,\text{ \ \ }%
\tilde{\nabla}_{\tilde{\varphi}X}\tilde{\varphi}X=-\frac{A}{2\tilde{\lambda}}%
X,  \label{eq3}
\end{equation}%
\begin{equation}
\tilde{\nabla}_{\tilde{\varphi}X}X=-\frac{A}{2\tilde{\lambda}}\tilde{\varphi}%
X-(\tilde{\lambda}+1)\xi ,\text{ \ \ }\tilde{\nabla}_{X}\tilde{\varphi}X=-%
\frac{B}{2\tilde{\lambda}}X+(1-\tilde{\lambda})\xi ,  \label{eq4}
\end{equation}%
\begin{equation}
\lbrack \xi ,X]=(1-\tilde{\lambda}-\frac{\tilde{\mu}}{2})\tilde{\varphi}X,%
\text{ \ \ }[\xi ,\tilde{\varphi}X]=(\tilde{\lambda}+1-\frac{\tilde{\mu}}{2}%
)X,  \label{eq5}
\end{equation}%
\begin{equation}
\lbrack X,\tilde{\varphi}X]=-\frac{B}{2\tilde{\lambda}}X+\frac{A}{2\tilde{%
\lambda}}\tilde{\varphi}X+2\xi ,  \label{eq6}
\end{equation}%
\begin{equation}
\tilde{h}\text{ }grad\tilde{\mu}=grad\tilde{\kappa},  \label{eq7}
\end{equation}%
\begin{equation}
X\tilde{\mu}=2A,  \label{eq8}
\end{equation}%
\begin{equation}
\tilde{\varphi}X\tilde{\mu}=-2B,  \label{eq9}
\end{equation}%
\begin{equation}
\xi (A)=(1-\tilde{\lambda}-\frac{\tilde{\mu}}{2})B,  \label{eq10}
\end{equation}%
\begin{equation}
\xi (B)=(\tilde{\lambda}+1-\frac{\tilde{\mu}}{2})A,  \label{eq11}
\end{equation}%
\begin{equation}
\left[ \xi ,\tilde{\varphi}grad\tilde{\lambda}\right] =0\text{.}
\label{figrad}
\end{equation}
\end{lemma}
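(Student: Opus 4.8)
The plan is to construct the $\tilde{h}$-frame explicitly out of Lemma \ref{L1}, and then obtain every one of \eqref{eq1}--\eqref{figrad} by feeding two ingredients into that frame: the structural identity \eqref{nablaxi}, and the covariant-derivative and curvature identities for $\tilde{h}$ in a generalized $(\tilde{\kappa},\tilde{\mu})$-paracontact metric manifold that are already available in \cite{kupmur} (the lemmas quoted in the proof of Lemma \ref{L1}). First I would fix the frame: since $\tilde{\kappa}$ is continuous and $\tilde{\kappa}(P)>-1$, pass to a neighborhood $U$ of $P$ on which $\tilde{\kappa}>-1$, so that $\tilde{\lambda}:=\sqrt{1+\tilde{\kappa}}$ is a smooth positive function on $U$; by \eqref{hsq} the $\tilde{g}$-symmetric operator $\tilde{h}$ restricted to $\mathcal{D}=\ker\eta$ (where $\tilde{\varphi}^{2}=I$) satisfies $\tilde{h}^{2}=\tilde{\lambda}^{2}I$, hence has the two simple eigenvalues $\pm\tilde{\lambda}$. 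Because $\tilde{h}$ anticommutes with $\tilde{\varphi}$ and $\tilde{h}\xi=0$, the eigendistributions are one-dimensional and interchanged by $\tilde{\varphi}$, so after shrinking $U$ I may choose a smooth unit eigenvector field $X$ with $\tilde{h}X=\tilde{\lambda}X$, making $\{\xi,X,\tilde{\varphi}X\}$ a local $\tilde{\varphi}$-basis; this is \eqref{hfrme}.

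Next I read off the connection. Applying \eqref{nablaxi} to $X$ and to $\tilde{\varphi}X$ and using $\tilde{h}X=\tilde{\lambda}X$, $\tilde{h}\tilde{\varphi}X=-\tilde{\lambda}\tilde{\varphi}X$ and $\tilde{\varphi}^{2}=I$ on $\mathcal{D}$ yields \eqref{eq1}. For \eqref{eq2} I differentiate $\tilde{h}X=\tilde{\lambda}X$ along $\xi$: since $\tilde{\nabla}_{\xi}\xi=0$, $\xi$ annihilates $\tilde{g}(X,X)$, and $\xi(\tilde{\lambda})=0$ by \eqref{zetak}, the vector $\tilde{\nabla}_{\xi}X$ is a multiple of $\tilde{\varphi}X$; inserting the expression for $\tilde{\nabla}_{\xi}\tilde{h}$ from \cite{kupmur} pins the coefficient down to $-\tilde{\mu}/2$, and the companion formula follows from the same argument applied to $\tilde{h}\tilde{\varphi}X=-\tilde{\lambda}\tilde{\varphi}X$. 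For \eqref{eq3}--\eqref{eq4} I expand each $\tilde{\nabla}_{Y}Z$, $Y,Z\in\{X,\tilde{\varphi}X\}$, in the basis $\{\xi,X,\tilde{\varphi}X\}$: metric compatibility forces $\tilde{\nabla}_{Y}Z$ to be $\tilde{g}$-orthogonal to $Z$; the $\xi$-component is $-\tilde{g}(Z,\tilde{\nabla}_{Y}\xi)$ up to the metric normalization and is computed from \eqref{eq1}; and the one remaining component is extracted by differentiating $\tilde{h}X=\tilde{\lambda}X$ (resp. $\tilde{h}\tilde{\varphi}X=-\tilde{\lambda}\tilde{\varphi}X$) along $Y$ and using the formula for $\tilde{\nabla}_{Y}\tilde{h}$ from \cite{kupmur}; this is exactly where $A=X\tilde{\lambda}$ and $B=\tilde{\varphi}X\tilde{\lambda}$ enter, producing the coefficients $-A/(2\tilde{\lambda})$ and $-B/(2\tilde{\lambda})$. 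Then \eqref{eq5} and \eqref{eq6} are immediate from $[Y,Z]=\tilde{\nabla}_{Y}Z-\tilde{\nabla}_{Z}Y$.

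For the remaining identities: \eqref{eq8}--\eqref{eq9} I would get by computing $\tilde{R}(X,\tilde{\varphi}X)\xi$, $\tilde{R}(X,\xi)\xi$ and $\tilde{R}(\tilde{\varphi}X,\xi)\xi$ from \eqref{eq1}--\eqref{eq6} and comparing with \eqref{paranullity} (equivalently, by citing \cite{kupmur}); and since $\tilde{\kappa}=\tilde{\lambda}^{2}-1$ forces $X\tilde{\kappa}=2\tilde{\lambda}A$, $\tilde{\varphi}X\tilde{\kappa}=2\tilde{\lambda}B$ and $\xi(\tilde{\kappa})=0$, the identity \eqref{eq7}, $\tilde{h}\,grad\,\tilde{\mu}=grad\,\tilde{\kappa}$, is just the componentwise form of \eqref{eq8}--\eqref{eq9} (it can also be produced from the contracted second Bianchi identity applied to \eqref{q1,q3}). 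For \eqref{eq10} I write $\xi(A)=\xi(X\tilde{\lambda})=[\xi,X]\tilde{\lambda}+X(\xi(\tilde{\lambda}))=[\xi,X]\tilde{\lambda}$, which by \eqref{eq5} equals $(1-\tilde{\lambda}-\frac{\tilde{\mu}}{2})(\tilde{\varphi}X\tilde{\lambda})=(1-\tilde{\lambda}-\frac{\tilde{\mu}}{2})B$, and \eqref{eq11} is the same computation for $\xi(B)$ using $[\xi,\tilde{\varphi}X]$. Finally, $\xi(\tilde{\lambda})=0$ gives $grad\,\tilde{\lambda}=AX-B\tilde{\varphi}X$, hence $\tilde{\varphi}\,grad\,\tilde{\lambda}=-BX+A\tilde{\varphi}X$; expanding $[\xi,-BX+A\tilde{\varphi}X]$ by the Leibniz rule and substituting \eqref{eq5}, \eqref{eq10}, \eqref{eq11} makes both the $X$- and $\tilde{\varphi}X$-components vanish, which is \eqref{figrad}.

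The hard part will be \eqref{eq3}--\eqref{eq4} together with \eqref{eq8}--\eqref{eq9}: these genuinely require the precise covariant-derivative identities for $\tilde{h}$ (to produce the $A/(2\tilde{\lambda})$, $B/(2\tilde{\lambda})$ coefficients) and a careful curvature / second-Bianchi computation. Once those are secured, \eqref{eq1}, \eqref{eq2}, \eqref{eq5}--\eqref{eq7}, \eqref{eq10}, \eqref{eq11} and \eqref{figrad} are routine bookkeeping within the $\tilde{h}$-frame.
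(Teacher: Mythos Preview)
Your plan is essentially the paper's: it too cites \cite{kupmur} for \eqref{eq1}--\eqref{eq6}, proves \eqref{eq10}--\eqref{eq11} by $\xi(A)=[\xi,X]\tilde{\lambda}$ and $\xi(B)=[\xi,\tilde{\varphi}X]\tilde{\lambda}$, and obtains \eqref{figrad} by expanding $[\xi,\tilde{\varphi}\,grad\,\tilde{\lambda}]$ with \eqref{eq5}, \eqref{eq10}, \eqref{eq11}. The only substantive reordering is in \eqref{eq7}--\eqref{eq9}: the paper proves \eqref{eq7} \emph{first}, via the contracted second Bianchi identity applied to \eqref{q1,q3} (using $\sum_i\varepsilon_i(\tilde{\nabla}_{X_i}\tilde{h})X_i=0$ from \eqref{eq3}--\eqref{eq4}), and then reads off \eqref{eq8}--\eqref{eq9} as its components; you propose the reverse order.

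Two small cautions. First, your curvature route to \eqref{eq8}--\eqref{eq9} is not guaranteed to produce new information: once \eqref{eq1}--\eqref{eq6} are in hand, the obvious identities $\tilde{R}(X,\tilde{\varphi}X)\xi=0$, $\tilde{R}(X,\xi)\xi=(\tilde{\kappa}+\tilde{\mu}\tilde{\lambda})X$, $\tilde{R}(\tilde{\varphi}X,\xi)\xi=(\tilde{\kappa}-\tilde{\mu}\tilde{\lambda})\tilde{\varphi}X$ hold automatically and do not involve $X\tilde{\mu}$ or $\tilde{\varphi}X\tilde{\mu}$; the Bianchi argument you mention parenthetically is what actually does the work, and it is exactly the paper's method. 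Second, with $\tilde{g}(X,X)=-1$, $\tilde{g}(\tilde{\varphi}X,\tilde{\varphi}X)=1$ one has $grad\,\tilde{\lambda}=-AX+B\tilde{\varphi}X$ and $\tilde{\varphi}\,grad\,\tilde{\lambda}=BX-A\tilde{\varphi}X$, the opposite sign from what you wrote; this is harmless for \eqref{figrad} but worth correcting.
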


\begin{proof}
The proofs of (\ref{eq1})-(\ref{eq6}) are given in \cite{kupmur}. For the
proof of (\ref{eq7}), we will use well known formula%
\begin{equation*}
\frac{1}{2}grad\text{ }\tau =\sum\limits_{i=1}^{3}\varepsilon _{i}(\tilde{%
\nabla}_{X_{i}}\tilde{Q})X_{i},
\end{equation*}%
where $\{X_{1}=X,$ $X_{2}=\tilde{\varphi}X$, $X_{3}=\xi \}$. Using the
equations (\ref{nablaxi}) and (\ref{L1}), since $tr\tilde{h}=tr\tilde{h}%
\tilde{\varphi}=0$, we obtain%
\begin{eqnarray}
\sum\limits_{i=1}^{3}\varepsilon _{i}(\tilde{\nabla}_{X_{i}}\tilde{Q})X_{i}
&=&\sum\limits_{i=1}^{3}\varepsilon _{i}X_{i}(\frac{\tau }{2}-\tilde{\kappa}%
)X_{i}+\sum\limits_{i=1}^{3}\varepsilon _{i}(X_{i}(\tilde{\mu})\tilde{h}%
\text{ }X_{i})  \notag \\
&&+\tilde{\mu}\sum\limits_{i=1}^{3}\varepsilon _{i}(\nabla _{X_{i}}\tilde{h}%
\text{ })X_{i}  \notag \\
&=&\frac{1}{2}grad\tau -grad\tilde{\kappa}+\tilde{h}\text{ }grad\tilde{\mu}
\label{eq14} \\
&&+\tilde{\mu}\sum\limits_{i=1}^{3}\varepsilon _{i}(\tilde{\nabla}_{X_{i}}%
\tilde{h}\text{ })X_{i}-\frac{1}{2}\xi (\tau )\xi  \notag
\end{eqnarray}
The relations (\ref{hfrme}), (\ref{eq3}) and (\ref{eq4}) yield $%
\sum\limits_{i=1}^{3}(\tilde{\nabla}_{X_{i}}\tilde{h})X_{i}=0.$ Using the
last relation in (\ref{eq14}), one has%
\begin{equation}
\frac{1}{2}grad\text{ }\tau =\frac{1}{2}grad\tau -grad\tilde{\kappa}+\tilde{h%
}\text{ }grad\tilde{\mu}-\frac{1}{2}\xi (\tau )\xi  \label{eq15}
\end{equation}%
that is%
\begin{equation}
-grad\tilde{\kappa}+\tilde{h}\text{ }grad\tilde{\mu}-\frac{1}{2}\xi (\tau
)\xi =0.  \label{eq16}
\end{equation}%
Since the vector field $-grad\tilde{\kappa}+\tilde{h}$ $grad\tilde{\mu}$ is
orthogonal to $\xi .$ So, we get (\ref{eq7}). The equations (\ref{eq8}) and (%
\ref{eq9}) are immediate consequences of (\ref{eq7}).

By virtue of (\ref{zetak}) and (\ref{eq5}), we have%
\begin{eqnarray*}
\xi (A) &=&\xi X\tilde{\lambda}=[\xi ,X]\tilde{\lambda}+X\xi \tilde{\lambda}%
=(1-\tilde{\lambda}-\frac{\tilde{\mu}}{2})\tilde{\varphi}X\tilde{\lambda} \\
\text{ \ \ \ \ \ \ \ \ \ \ \ \ \ \ \ \ \ \ \ \ \ \ \ \ \ \ \ \ \ \ \ \ \ \ \ 
} &=&(1-\tilde{\lambda}-\frac{\tilde{\mu}}{2})B
\end{eqnarray*}%
The relation (\ref{eq11}) is proved similarly. Using (\ref{zetak}), we have 
\begin{equation}
grad\tilde{\lambda}=-AX+B\tilde{\varphi}X,\text{ \ }\tilde{\varphi}grad%
\tilde{\lambda}=-A\tilde{\varphi}X+BX\text{\ .}  \label{grad1}
\end{equation}%
From the relations (\ref{grad1}), (\ref{eq5}), (\ref{eq10}) and (\ref{eq11})
we obtain%
\begin{eqnarray*}
\left[ \xi ,\tilde{\varphi}grad\tilde{\lambda}\right] &=&\left[ \xi ,-A%
\tilde{\varphi}X+BX\right] \\
&=&-(\xi A)\tilde{\varphi}X-A\left[ \xi ,\tilde{\varphi}X\right] +(\xi B)X+B%
\left[ \xi ,X\right] =0.
\end{eqnarray*}
\end{proof}

\begin{lemma}
\label{L3}Let $(M,\tilde{\varphi},\xi ,\eta ,\tilde{g})$ be a generalized $(%
\tilde{\kappa}\neq -1,\tilde{\mu})$-paracontact metric manifold. Then, for
any point $P\in M$, with $\tilde{\kappa}(P)<-1$ there exist a neighborhood $U
$ of $P$ and an $\tilde{h}$-frame on $U$, i.e. orthonormal vector fields $%
\xi ,$ $X$, $\tilde{\varphi}X$, defined on $U$, such that 
\begin{equation}
\tilde{h}X=\tilde{\lambda}\tilde{\varphi}X,\text{ \ \ }\tilde{h}\tilde{%
\varphi}X=-\tilde{\lambda}X\text{, \ \ }h\xi =0\text{, \ \ }\tilde{\lambda}=%
\sqrt{-1-\tilde{\kappa}}  \label{hfrme2}
\end{equation}%
at any point $q\in U$. Moreover, setting $A=X\tilde{\lambda}$ and $B=\tilde{%
\varphi}X\tilde{\lambda}$ on $U$ the following formulas are true :%
\begin{equation}
\tilde{\nabla}_{X}\xi =-\tilde{\varphi}X+\tilde{\lambda}X,\text{ }\tilde{%
\nabla}_{\tilde{\varphi}X}\xi =-X-\tilde{\lambda}\tilde{\varphi}X,
\label{eq1.1}
\end{equation}%
\begin{equation}
\tilde{\nabla}_{\xi }X=-\frac{\tilde{\mu}}{2}\tilde{\varphi}X,\text{ }\tilde{%
\nabla}_{\xi }\tilde{\varphi}X=-\frac{\tilde{\mu}}{2}X,  \label{eq2.2}
\end{equation}%
\begin{equation}
\tilde{\nabla}_{X}X=-\frac{B}{2\tilde{\lambda}}\tilde{\varphi}X+\tilde{%
\lambda}\xi ,\text{ \ \ }\tilde{\nabla}_{\tilde{\varphi}X}\tilde{\varphi}X=-%
\frac{A}{2\tilde{\lambda}}X+\tilde{\lambda}\xi ,  \label{eq3.3}
\end{equation}%
\begin{equation}
\tilde{\nabla}_{\tilde{\varphi}X}X=-\frac{A}{2\tilde{\lambda}}\tilde{\varphi}%
X-\xi ,\text{ \ \ }\tilde{\nabla}_{X}\tilde{\varphi}X=-\frac{B}{2\tilde{%
\lambda}}X+\xi ,  \label{eq44}
\end{equation}%
\begin{equation}
\lbrack \xi ,X]=-\tilde{\lambda}X+(1-\frac{\tilde{\mu}}{2})\tilde{\varphi}X,%
\text{ \ \ }[\xi ,\tilde{\varphi}X]=(1-\frac{\tilde{\mu}}{2})X+\tilde{\lambda%
}\tilde{\varphi}X,  \label{eq5.5}
\end{equation}%
\begin{equation}
\lbrack X,\tilde{\varphi}X]=-\frac{B}{2\tilde{\lambda}}X+\frac{A}{2\tilde{%
\lambda}}\tilde{\varphi}X+2\xi ,  \label{eq6.6}
\end{equation}%
\begin{equation}
\tilde{h}\text{ }grad\tilde{\mu}=grad\tilde{\kappa},  \label{eq7.7}
\end{equation}%
\begin{equation}
X\tilde{\mu}=2B,  \label{eq8.8}
\end{equation}%
\begin{equation}
\tilde{\varphi}X\tilde{\mu}=-2A,  \label{eq9.9}
\end{equation}%
\begin{equation}
\xi (A)=-\tilde{\lambda}A+(1-\frac{\tilde{\mu}}{2})B,  \label{eq10.10}
\end{equation}%
\begin{equation}
\xi (B)=(1-\frac{\tilde{\mu}}{2})A+\tilde{\lambda}B,  \label{eq11.11}
\end{equation}%
\begin{equation}
\left[ \xi ,\tilde{\varphi}grad\tilde{\lambda}\right] =0\text{.}
\label{figrad1}
\end{equation}
\end{lemma}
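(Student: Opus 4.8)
The plan is to parallel the proof of Lemma~\ref{L2}, with the diagonal ($\mathfrak{h}_{1}$) form of $\tilde{h}$ replaced by the ``rotational'' form (\ref{A3}) ($\mathfrak{h}_{3}$ type) that $\tilde{\kappa}<-1$ forces. Since $\tilde{\kappa}$ is smooth and $\tilde{\kappa}(P)<-1$, we have $1+\tilde{\kappa}<0$ on a neighborhood $U$ of $P$; then (\ref{hsq}) shows that on $\mathcal{D}=\ker\eta$ the operator $\tilde{h}$ satisfies $(\tilde{h}|_{\mathcal{D}})^{2}=(1+\tilde{\kappa})\,\mathrm{Id}_{\mathcal{D}}=-\tilde{\lambda}^{2}\,\mathrm{Id}_{\mathcal{D}}$ with $\tilde{\lambda}=\sqrt{-1-\tilde{\kappa}}>0$, so $\tilde{h}$ is of $\mathfrak{h}_{3}$ type (in the sense recalled at the end of Section~\ref{preliminaries}), and one may pick a unit vector field $X$ on $U$ with $\tilde{h}X=\tilde{\lambda}\tilde{\varphi}X$; this gives the $\tilde{h}$-frame (\ref{hfrme2}). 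The connection identities (\ref{eq1.1})--(\ref{eq6.6}) are then obtained as the $\mathfrak{h}_{3}$-analogues of (\ref{eq1})--(\ref{eq6}) in \cite{kupmur}: one substitutes (\ref{hfrme2}) into (\ref{nablaxi}), uses that $\tilde{h}$ anticommutes with $\tilde{\varphi}$ together with the paracontact identities and the Koszul formula, and reads the Lie brackets (\ref{eq5.5})--(\ref{eq6.6}) off the covariant derivatives.

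Next I would prove (\ref{eq7.7}) exactly as (\ref{eq7}) was proved. Starting from $\tfrac{1}{2}\,grad\,\tau=\sum_{i=1}^{3}\varepsilon_{i}(\tilde{\nabla}_{X_{i}}\tilde{Q})X_{i}$ with $\{X_{1},X_{2},X_{3}\}=\{X,\tilde{\varphi}X,\xi\}$ and inserting the Ricci operator from Lemma~\ref{L1}, one splits the divergence into the contributions of $(\tfrac{\tau}{2}-\tilde{\kappa})I$, of $(-\tfrac{\tau}{2}+3\tilde{\kappa})\eta\otimes\xi$, and of $\tilde{\mu}\tilde{h}$, using (\ref{nablaxi}) and $\mathrm{tr}\,\tilde{h}=\mathrm{tr}\,\tilde{h}\tilde{\varphi}=0$. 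The crucial point is $\sum_{i=1}^{3}\varepsilon_{i}(\tilde{\nabla}_{X_{i}}\tilde{h})X_{i}=0$, checked directly from (\ref{hfrme2}), (\ref{eq3.3}) and (\ref{eq44}). What survives is $-grad\,\tilde{\kappa}+\tilde{h}\,grad\,\tilde{\mu}-\tfrac{1}{2}\xi(\tau)\xi=0$; since $grad\,\tilde{\kappa}\perp\xi$ by (\ref{zetak}) and $\tilde{h}$ maps $TM$ into $\mathcal{D}$, the $\xi$-component forces $\xi(\tau)=0$ and the $\mathcal{D}$-component is precisely (\ref{eq7.7}).

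The remaining identities are bookkeeping with the $\tilde{h}$-frame. For (\ref{eq8.8}) and (\ref{eq9.9}): from $\tilde{\lambda}^{2}=-1-\tilde{\kappa}$ and $\xi(\tilde{\kappa})=0$ we get $\xi(\tilde{\lambda})=0$ and $grad\,\tilde{\kappa}=-2\tilde{\lambda}\,grad\,\tilde{\lambda}$, so both $grad\,\tilde{\lambda}$ and $grad\,\tilde{\mu}$ lie in $\mathcal{D}=\mathrm{span}\{X,\tilde{\varphi}X\}$; plugging these into (\ref{eq7.7}), applying $\tilde{h}X=\tilde{\lambda}\tilde{\varphi}X$, $\tilde{h}\tilde{\varphi}X=-\tilde{\lambda}X$, and comparing the $X$- and $\tilde{\varphi}X$-components yields $X\tilde{\mu}=2B$ and $\tilde{\varphi}X\tilde{\mu}=-2A$. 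For (\ref{eq10.10}) and (\ref{eq11.11}): since $\xi(\tilde{\lambda})=0$ one has $\xi(A)=\xi X\tilde{\lambda}=[\xi,X]\tilde{\lambda}$ and $\xi(B)=\xi\,\tilde{\varphi}X\,\tilde{\lambda}=[\xi,\tilde{\varphi}X]\tilde{\lambda}$, and substituting the brackets (\ref{eq5.5}) gives the stated formulas. Finally, for (\ref{figrad1}) one writes $\tilde{\varphi}\,grad\,\tilde{\lambda}$ in the frame (using $\tilde{\varphi}^{2}X=X$ on $\mathcal{D}$), expands $[\xi,\tilde{\varphi}\,grad\,\tilde{\lambda}]$ by the Leibniz rule for the bracket, and cancels everything with (\ref{eq5.5}), (\ref{eq10.10}) and (\ref{eq11.11}) — exactly as (\ref{figrad}) was obtained in Lemma~\ref{L2}.

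The main computational burden lies in two places: first, the connection and bracket formulas (\ref{eq1.1})--(\ref{eq6.6}), if these are not already available from \cite{kupmur} and must be re-derived for the $\mathfrak{h}_{3}$ frame; and second, the divergence identity behind (\ref{eq7.7}), in particular the vanishing of $\sum_{i}\varepsilon_{i}(\tilde{\nabla}_{X_{i}}\tilde{h})X_{i}$, because here $\tilde{h}$ acts on $\mathcal{D}$ as a rotation rather than diagonally, so the internal cancellations follow a different sign pattern than in the $\mathfrak{h}_{1}$ case of Lemma~\ref{L2} and must be checked with care. Everything else is a routine transcription of the $\tilde{\kappa}>-1$ proof, with the $\mathfrak{h}_{3}$ relations (\ref{hfrme2}), (\ref{eq1.1})--(\ref{eq6.6}) replacing (\ref{hfrme}), (\ref{eq1})--(\ref{eq6}).
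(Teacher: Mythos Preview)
Your proposal is correct and follows essentially the same route as the paper's own proof: (\ref{eq1.1})--(\ref{eq6.6}) are taken from \cite{kupmur}, (\ref{eq7.7}) is obtained by the same divergence-of-$\tilde{Q}$ argument as (\ref{eq7}) (with the verification that $\sum_{i}\varepsilon_{i}(\tilde{\nabla}_{X_{i}}\tilde{h})X_{i}=0$ in the $\mathfrak{h}_{3}$ frame), (\ref{eq8.8})--(\ref{eq9.9}) drop out of (\ref{eq7.7}), and (\ref{eq10.10})--(\ref{figrad1}) follow from $\xi(\tilde{\lambda})=0$ together with the bracket formulas (\ref{eq5.5}) exactly as you outline.
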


\begin{proof}
The proofs of (\ref{eq1.1})-(\ref{eq6.6}) are given in \cite{kupmur}. The
proof of (\ref{eq7.7}) is similar to proof of Lemma \ref{L2}, equation (\ref%
{eq7}). The equations (\ref{eq8.8}) and (\ref{eq9.9}) are immediate
consequences of (\ref{eq7.7}).

By virtue of (\ref{zetak}) and (\ref{eq5.5}), we have%
\begin{eqnarray*}
\xi (A) &=&\xi X\tilde{\lambda}=[\xi ,X]\tilde{\lambda}+X\xi \tilde{\lambda}%
=-\tilde{\lambda}X\tilde{\lambda}+(1-\frac{\tilde{\mu}}{2})\tilde{\varphi}X%
\tilde{\lambda} \\
\text{ \ \ \ \ \ \ \ \ \ \ \ \ \ \ \ \ \ \ \ \ \ \ \ \ \ \ \ \ \ \ \ \ \ \ \ 
} &=&-\tilde{\lambda}A+(1-\frac{\tilde{\mu}}{2})B
\end{eqnarray*}%
The relation (\ref{eq11.11}) is proved similarly. Using (\ref{zetak}), we
have 
\begin{equation}
grad\tilde{\lambda}=-AX+B\tilde{\varphi}X,\text{ \ }\tilde{\varphi}grad%
\tilde{\lambda}=-A\tilde{\varphi}X+BX\text{\ .}  \label{grad1.1}
\end{equation}%
From the relations (\ref{grad1.1}), (\ref{eq5.5}), (\ref{eq10.10}) and (\ref%
{eq11.11}) we obtain%
\begin{eqnarray*}
\left[ \xi ,\tilde{\varphi}grad\tilde{\lambda}\right] &=&\left[ \xi ,-A%
\tilde{\varphi}X+BX\right] \\
&=&-(\xi A)\tilde{\varphi}X-A\left[ \xi ,\tilde{\varphi}X\right] +(\xi B)X+B%
\left[ \xi ,X\right] =0.
\end{eqnarray*}
\end{proof}

\section{Generalized $(\tilde{\protect\kappa}\neq -1,\tilde{\protect\mu})$%
-paracontact metric manifolds with $\protect\xi (\tilde{\protect\mu})=0$}

\label{(k,m)-paracontact metric with}

\bigskip We shall give a local classification of generalized $(\tilde{\kappa}%
\neq -1,\tilde{\mu})$-paracontact metric manifolds with $\tilde{\kappa}>-1$
which satisfy the condition $\xi (\tilde{\mu})=0$.

\begin{theorem}[Main Theorem]
\label{AX} Let $(M,\tilde{\varphi},\xi ,\eta ,\tilde{g})$ be a generalized $(%
\tilde{\kappa}\neq -1,\tilde{\mu})$-paracontact metric manifold with $\tilde{%
\kappa}>-1$ and $\xi (\tilde{\mu})=0$. Then

$1)$ At any point of $M$, precisely one of the following relations is valid: 
$\tilde{\mu}=2(1+\sqrt{1+\tilde{\kappa}}),$ or $\tilde{\mu}=2(1-\sqrt{1+%
\tilde{\kappa}})$

$2)$ At any point $P\in M$ there exists a chart $(U,(x,y,z))$ with $P\in
U\subseteq M,$ such that

\ \ \ \ \ \ $i)$ the functions $\tilde{\kappa},\tilde{\mu}$ depend only on
the variable $z.$

\ \ \ \ \ \ $ii)$ if $\tilde{\mu}=2(1+\sqrt{1-\tilde{\kappa}}),$ $($resp. $%
\tilde{\mu}=2(1-\sqrt{1-\tilde{\kappa}})),$ the tensor fields $\eta $, $\xi $%
, $\tilde{\varphi}$, $\tilde{g}$, $\tilde{h}$ are given by the relations,%
\begin{equation*}
\xi =\frac{\partial }{\partial x},\text{ \ \ }\eta =dx-adz
\end{equation*}%
\begin{equation*}
\tilde{g}=\left( 
\begin{array}{ccc}
1 & 0 & -a \\ 
0 & 1 & -b \\ 
-a & -b & -1+a^{2}+b^{2}%
\end{array}%
\right) \text{ \ \ \ \ }\left( \text{resp. \ \ }\tilde{g}=\left( 
\begin{array}{ccc}
1 & 0 & -a \\ 
0 & -1 & -b \\ 
-a & -b & 1+a^{2}+b^{2}%
\end{array}%
\right) \right) ,
\end{equation*}%
\begin{equation*}
\tilde{\varphi}=\left( 
\begin{array}{ccc}
0 & a & -ab \\ 
0 & b & 1-b^{2} \\ 
0 & 1 & -b%
\end{array}%
\right) \text{ \ \ \ \ }\left( \text{resp. \ \ }\tilde{\varphi}=\left( 
\begin{array}{ccc}
0 & a & -ab \\ 
0 & b & 1-b^{2} \\ 
0 & 1 & -b%
\end{array}%
\right) \right) ,
\end{equation*}%
\begin{equation*}
\tilde{h}=\left( 
\begin{array}{ccc}
0 & 0 & a\tilde{\lambda} \\ 
0 & -\tilde{\lambda} & 2\tilde{\lambda}b \\ 
0 & 0 & \tilde{\lambda}%
\end{array}%
\right) \text{ \ \ \ \ \ }\left( \text{resp. \ \ }\tilde{h}=\left( 
\begin{array}{ccc}
0 & 0 & -a\tilde{\lambda} \\ 
0 & \tilde{\lambda} & -2\tilde{\lambda}b \\ 
0 & 0 & -\tilde{\lambda}%
\end{array}%
\right) \right)
\end{equation*}%
with respect to the basis $\left( \frac{\partial }{\partial x},\frac{%
\partial }{\partial y},\frac{\partial }{\partial z}\right) ,$ where $%
a=-2y+f(z)$ \ \ (resp.\ $a=2y+f(z)$), $b=-\frac{y}{2}\frac{r^{^{\prime }}(z)%
}{r(z)}-2xr(z)+s(z)$,\ $\tilde{\lambda}=\tilde{\lambda}(z)=r(z)$ \ and $f(z)$%
, $r(z)$, $s(z)$ are arbitrary smooth functions of $z.$
\end{theorem}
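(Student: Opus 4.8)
The plan is to start from the $\tilde{h}$-frame $\{\xi,X,\tilde{\varphi}X\}$ guaranteed by Lemma \ref{L2} (valid since $\tilde{\kappa}>-1$) and to exploit the hypothesis $\xi(\tilde{\mu})=0$ together with the structure equations (\ref{eq1})--(\ref{figrad}). First I would apply $\xi$ to equation (\ref{eq8}), $X\tilde{\mu}=2A$, and to (\ref{eq9}), $\tilde{\varphi}X\tilde{\mu}=-2B$; using $\xi(\tilde{\mu})=0$, the bracket relations (\ref{eq5}), and the expressions (\ref{eq10})--(\ref{eq11}) for $\xi(A)$ and $\xi(B)$, one obtains algebraic relations forcing $(1-\tilde{\lambda}-\tfrac{\tilde{\mu}}{2})A=0$ and $(\tilde{\lambda}+1-\tfrac{\tilde{\mu}}{2})B=0$ (or a similar coupled pair). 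Combining these with (\ref{eq7})--(\ref{eq9}), which give $\tilde{h}\,grad\,\tilde{\mu}=grad\,\tilde{\kappa}$ and hence $\tilde{\lambda}A = $ (derivative of $\tilde{\kappa}$ along $X$)$/2$ etc., and with $\tilde{\lambda}^2=1+\tilde{\kappa}$ so that $grad\,\tilde{\kappa}=2\tilde{\lambda}\,grad\,\tilde{\lambda}$, I expect to deduce that at each point either $\tilde{\mu}=2(1+\tilde{\lambda})$ or $\tilde{\mu}=2(1-\tilde{\lambda})$, i.e. part $1)$. A brief connectedness/continuity argument then shows the alternative is globally fixed on each component, or at least that the two cases partition $M$ into open sets.

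Next, for part $2)$, I would fix one of the two cases, say $\tilde{\mu}=2(1+\tilde{\lambda})$, and substitute back. With this identity the bracket formulas (\ref{eq5})--(\ref{eq6}) simplify: (\ref{eq5}) becomes $[\xi,X]=-2\tilde{\lambda}\tilde{\varphi}X$ (up to sign conventions) and $[\xi,\tilde{\varphi}X]=0$, while (\ref{eq10})--(\ref{eq11}) become $\xi(A)=-2\tilde{\lambda}B$, $\xi(B)=0$. I would also revisit (\ref{eq8})--(\ref{eq9}): since $\tilde{\mu}=2+2\tilde{\lambda}$ we get $X\tilde{\mu}=2X\tilde{\lambda}=2A$ (consistent) and $\tilde{\varphi}X\tilde{\mu}=2\tilde{\varphi}X\tilde{\lambda}=2B$, but (\ref{eq9}) says this equals $-2B$, forcing $B=0$; hence $\tilde{\varphi}X\tilde{\lambda}=0$ and, from (\ref{eq10}), $\xi(A)=0$ as well. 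So $\tilde{\lambda}$ (equivalently $\tilde{\kappa}$ and $\tilde{\mu}$) is annihilated by both $\xi$ and $\tilde{\varphi}X$, i.e. it is constant along the integral manifolds of the involutive distribution $\langle \xi, \tilde{\varphi}X\rangle$ (involutivity follows because $[\xi,\tilde{\varphi}X]=0$). This is exactly what will let me introduce a coordinate $z$ with $\tilde{\lambda}=\tilde{\lambda}(z)$, proving $i)$: choose $z$ so that $X = \partial/\partial z$ up to lower-order terms, or more precisely pick coordinates adapted to the $2$-dimensional foliation tangent to $\langle\xi,\tilde{\varphi}X\rangle$ with $X\tilde{\lambda}=A\neq 0$ transverse.

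Then to get the explicit normal forms in $ii)$ I would build the chart $(x,y,z)$ step by step. Using $[\xi,\tilde{\varphi}X]=0$ and that $\xi$ is non-vanishing, I would choose $x$ with $\xi=\partial/\partial x$; then from the remaining brackets (\ref{eq5})--(\ref{eq6}) with $B=0$ substituted, namely $[\xi,X]=c(z)\tilde{\varphi}X$ and $[X,\tilde{\varphi}X]=\tfrac{A}{2\tilde{\lambda}}\tilde{\varphi}X+2\xi$, I would solve the resulting first-order PDE system for the components of $X$ and $\tilde{\varphi}X$ in the $(x,y,z)$ coordinates, integrating the $x$- and $y$-dependence explicitly and absorbing the $z$-integration constants into the arbitrary functions $f(z),r(z),s(z)$ (with $r=\tilde{\lambda}$). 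This yields $\xi=\partial_x$, $\eta=dx-a\,dz$ with $a=-2y+f(z)$, and the stated $b$; from these $\tilde{\varphi}$ is determined by $\tilde{\varphi}\xi=0$, $\tilde{\varphi}X,\ \tilde{\varphi}(\tilde{\varphi}X)=X$ (on $\mathcal{D}$), $\tilde{g}$ is determined by (\ref{G METRIC}) together with $d\eta(\cdot,\cdot)=\tilde g(\cdot,\tilde\varphi\cdot)$ and the $\tilde{\varphi}$-basis orthonormality, and $\tilde{h}$ from $\tilde{h}X=\tilde{\lambda}X$. The case $\tilde{\mu}=2(1-\tilde{\lambda})$ is handled symmetrically (it forces $A=0$ instead of $B=0$, swapping the roles of $X$ and $\tilde{\varphi}X$, hence the sign changes $a=2y+f(z)$ and the opposite-signed $\tilde{g},\tilde{h}$). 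Finally I would verify, by direct computation with these explicit tensors, that (\ref{paranullity}) holds with the prescribed $\tilde{\kappa},\tilde{\mu}$, closing the equivalence.

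The main obstacle I anticipate is part $ii)$: the passage from the abstract frame data to explicit coordinate formulas requires integrating a coupled nonlinear first-order system for the frame components, and one must choose the coordinates $(x,y,z)$ very carefully — first straightening $\xi$, then a coordinate adapted to the $\langle\xi,\tilde{\varphi}X\rangle$-foliation, then exploiting the precise form of $[X,\tilde{\varphi}X]$ — so that the integration constants line up with the claimed arbitrary functions $f,r,s$. Getting the coefficient $-\tfrac12\tfrac{r'}{r}$ in $b$ and the factor $2xr$ correct is where the bookkeeping is delicate; everything else (part $1)$, the reduction $B=0$, the verification of (\ref{paranullity})) is comparatively mechanical given Lemma \ref{L2}.
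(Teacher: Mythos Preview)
Your proposal is correct and follows essentially the same strategy as the paper: derive an algebraic constraint linking $\tilde{\lambda}$ and $\tilde{\mu}$ from $\xi(\tilde{\mu})=0$, use a continuity/connectedness argument to conclude the global dichotomy $\tilde{\mu}=2(1\pm\tilde{\lambda})$, then in each case deduce $B=0$ (resp.\ $A=0$), use the resulting commutativity $[\xi,\tilde{\varphi}X]=0$ (resp.\ $[\xi,X]=0$) to introduce adapted coordinates $(x,y,z)$ with $\xi=\partial_x$ and $\tilde{\varphi}X=\partial_y$ (resp.\ $X=\partial_y$), and finally integrate the remaining bracket relations to pin down $a,b,c$ and hence all the structure tensors. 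Your derivation of the constraints is actually slightly more direct than the paper's: you apply $\xi$ to (\ref{eq8}) and (\ref{eq9}) separately to get $(\tilde{\lambda}+1-\tfrac{\tilde{\mu}}{2})A=0$ and $(1-\tilde{\lambda}-\tfrac{\tilde{\mu}}{2})B=0$ (you have the roles of $A,B$ swapped, but you correctly hedge), whereas the paper first passes through $\xi(AB)=0$ to obtain the quadratic relation (\ref{zetaAB2}) and then differentiates again to reach the product $F\cdot AB=0$; both routes feed into the same topological argument on $N=\{grad\,\tilde{\lambda}\neq 0\}$ and its complement. One small note: the final verification of (\ref{paranullity}) that you propose is not part of the Main Theorem's proof in the paper (it is carried out separately in the converse construction, Theorem~\ref{important}), so you may omit it here.
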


\begin{proof}[Proof of the Main Theorem:]
Let $\left\{ \xi ,X,\tilde{\varphi}X\right\} $ be an $\tilde{h}$-frame, such
that%
\begin{equation*}
\tilde{h}X=\tilde{\lambda}X,\text{ \ \ }\tilde{h}\tilde{\varphi}X=-\tilde{%
\lambda}\tilde{\varphi}X,\text{ \ \ \ }\tilde{\lambda}=\sqrt{1+\tilde{\kappa}%
}
\end{equation*}%
in an appropriate neighbourhood of an arbitrary point of $M$. Using the
hypothesis $\xi (\tilde{\mu})=0$ and equations (\ref{eq8})-(\ref{figrad})
and (\ref{grad1}) we have the following relations,%
\begin{equation}
(\tilde{\varphi}grad\lambda )\tilde{\mu}=4AB,  \label{eqlamda1}
\end{equation}%
\begin{equation}
\left[ \xi ,\tilde{\varphi}grad\lambda \right] \tilde{\mu}=0,
\label{eqlamda2}
\end{equation}%
\begin{equation}
\xi (AB)=0,  \label{zeta(AB)}
\end{equation}%
\begin{equation}
A\xi B+B\xi A=0,  \label{zeta AB1}
\end{equation}%
\begin{equation}
A^{2}(\tilde{\lambda}+1-\frac{\tilde{\mu}}{2})+B^{2}(1-\tilde{\lambda}-\frac{%
\tilde{\mu}}{2})=0\text{.}  \label{zetaAB2}
\end{equation}

Differentiating the relation (\ref{zetaAB2}) with respect to $\xi $ and
using the equations (\ref{zetak}), \ $\xi (\tilde{\mu})=0$, (\ref{eq10}), (%
\ref{eq11}) and (\ref{zetaAB2}), we obtain 
\begin{equation}
(1+\tilde{\lambda}-\frac{\tilde{\mu}}{2})(-\tilde{\lambda}+1-\frac{\tilde{\mu%
}}{2})AB=0.  \label{eqlamda3}
\end{equation}

We put $F=(1+\tilde{\lambda}-\frac{\tilde{\mu}}{2})(-\tilde{\lambda}+1-\frac{%
\tilde{\mu}}{2})$ \ and consider the set $N=\left\{ p\in M\mid (\func{grad}%
\tilde{\lambda})(p)\neq 0\right\} .$ We will prove that $F=0$ at any point
of $N$. Let $p\in N$ be such that $F(p)\neq 0$. From (\ref{eqlamda3}) we get 
$(AB)(p)=0$. We consider cases $\{A(p)=$ $B(p)=0$ $\},\{A(p)\neq 0$, $%
B(p)=0\}$ and $\{A(p)=0$, $B(p)\neq 0\}$. Now we will examine the first
case. In this case, by (\ref{zetak}), we get $(\xi (\tilde{\lambda}))(p)=0.$
As a result we obtain $(\func{grad}\tilde{\lambda})(p)=0$ which is a
contradiction with $(\func{grad}\tilde{\lambda})(p)\neq 0.$ So, the first
case is impossible. We assume that $\{A(p)\neq 0$, $B(p)=0\}$. Since the
function $F$ is continuous, we find that a neighbourhood $V\subseteq N$
exists, with $p\in V$ such that $F\neq 0$ at any point of $V$. Similarly,
due to the fact that the function $A$ is continuous on its domain, a
neighbourhood $W$ of $p$ exists with $p\in W\subset V$, such that $A\neq 0$
at any point of $W$, and thus $B=0$ on $W$. From (\ref{zetaAB2}), we have $%
(1-\tilde{\lambda}-\frac{\tilde{\mu}}{2})=0$ at any point of $W$ and thus $%
F=0$ on $W$, which is a contradiction. Since the last case is similar to the
second case we omit it. Therefore, $F=0$ at any point of $N$. In what
follows, we will work on the complement $N^{C}$ of set $N$, in order to
prove that $F=0$ on $M$. If $N^{C}=\varnothing $, then $F=0$ on $M$. \ Let
us suppose that $N^{C}\neq \varnothing $. Then $\ $we have $\func{grad}%
\tilde{\lambda}=0$ on $N^{C}$ and thus the function of $\tilde{\lambda}$ is
constant at any connected component of the interior ($N^{C})^{\circ }$. From
the constancy of $\tilde{\lambda}$ and the relations (\ref{eq8}) and (\ref%
{eq9}), $\xi (\tilde{\mu})=0$, the function $\tilde{\mu}$ is also constant.
As a result we find that $F$ is constant on any connected component of ($%
N^{C})^{\circ }$. Because $M$ is connected and $F=0$ on $N$ and $F=$
constant on any connected component of ($N^{C})^{\circ }$ we conclude that $%
F=0,$ or equivalently $(1+\tilde{\lambda}-\frac{\tilde{\mu}}{2})(-\tilde{%
\lambda}+1-\frac{\tilde{\mu}}{2})=0$ at any point of $M$.

Now we consider the open disjoint sets $U_{0}=\{p\in M\mid (\tilde{\lambda}%
+1-\frac{\tilde{\mu}}{2})(p)\neq 0$ $\}$ and $U_{1}=\{p\in M\mid (1-\tilde{%
\lambda}-\frac{\tilde{\mu}}{2})(p)\neq 0$ $\}$.We have $U_{0}$ $\cup $ $%
U_{1}=M.$ Due to the fact that $M$ is connected, we conclude that $\{M=U_{0}$
and $U_{1}=\varnothing \}$ or $\{U_{0}=\varnothing $ and $U_{1}=M\}$.
Regarding the set $U_{0}$ we have $\tilde{\mu}=2(1+\tilde{\lambda})$, or
equivalently $\tilde{\mu}=2(1+\sqrt{1+\tilde{\kappa}})$ at any point $M$.
Similarly, regarding the set $U_{1}$ we obtain $\tilde{\mu}=2(1-\tilde{%
\lambda})=2(1-\sqrt{1+\tilde{\kappa})}$. Therefore, $(1)$ is proved. Now, we
will examine the cases $\tilde{\mu}=2(1+\sqrt{1+\tilde{\kappa}})$ and $%
\tilde{\mu}=2(1-\sqrt{1+\tilde{\kappa}})$.

\textit{Case 1.} $\tilde{\mu}=2(1+\sqrt{1+\tilde{\kappa}}).$

Let $p\in M$ and $\{\xi ,X,\tilde{\varphi}X\}$ be an $\tilde{h}$-frame on a
neighborhood $U$ of $p.$ Using the assumption $\tilde{\mu}=2(1+\sqrt{1+%
\tilde{\kappa}})$ and (\ref{zetaAB2}) we obtain $B=0$ and thus the relations
(\ref{eq5}) and (\ref{eq6}) are reduced to%
\begin{equation}
\lbrack \xi ,X]=-2\tilde{\lambda}\tilde{\varphi}X,  \label{eq4.1}
\end{equation}%
\begin{equation}
\text{\ }[\xi ,\tilde{\varphi}X]=0,\text{ \ }  \label{eq4.2}
\end{equation}%
\begin{equation}
\text{\ }[X,\tilde{\varphi}X]=-\frac{A}{2\tilde{\lambda}}\tilde{\varphi}%
X+2\xi .  \label{eq4.3}
\end{equation}%
Since $[\xi ,\tilde{\varphi}X]=0$, the distribution which is spanned by\ $%
\xi $ and $\tilde{\varphi}X$ is integrable and so for any $q\in V$ there
exist a chart $(V,(x,y,z)\}$ at $p\in V\subset U$, such that 
\begin{equation}
\xi =\frac{\partial }{\partial x},\text{ \ \ }X=a\frac{\partial }{\partial x}%
+b\frac{\partial }{\partial y}+c\frac{\partial }{\partial z},\text{ \ \ }%
\tilde{\varphi}X=\frac{\partial }{\partial y},  \label{eq4.4}
\end{equation}%
where $a$, $b$ and $c$ are smooth functions on $V$. Since $\xi $, $X$ and $%
\tilde{\varphi}X$ are linearly independent we have $c\neq 0$ at any point of 
$V$. By using (\ref{eq4.4}), (\ref{zetak}) and $B=0$ we obtain%
\begin{equation}
\frac{\partial \tilde{\lambda}}{\partial x}=0\text{ \ \ and \ }\frac{%
\partial \tilde{\lambda}}{\partial y}=0\text{\ .}  \label{eq4.5}
\end{equation}%
From (\ref{eq4.5}) we find 
\begin{equation}
\tilde{\lambda}=r(z),  \label{eqlamda}
\end{equation}%
where $r(z)$ is smooth function of $z$ defined on $V$. By using (\ref{eq4.1}%
), (\ref{eq4.3}) and (\ref{eq4.4}) we have following partial differential
equations:%
\begin{equation}
\frac{\partial a}{\partial x}=0,\text{ \ }\frac{\partial b}{\partial x}=-2%
\tilde{\lambda},\text{ \ \ }\frac{\partial c}{\partial x}=0,  \label{eq4.6}
\end{equation}%
\begin{equation}
\frac{\partial a}{\partial y}=-2,\text{ \ }\frac{\partial b}{\partial y}=-%
\frac{A}{2\tilde{\lambda}},\text{ \ \ }\frac{\partial c}{\partial y}=0.
\label{eq4.7}
\end{equation}%
From $\frac{\partial c}{\partial x}=\frac{\partial c}{\partial y}=0$ it
follows that $c=c(z)$ and because of the fact that $c\neq 0$, we can assume
that $c=1$ through a reparametrization of the variable $z$. For the sake of
simplicity we will continue to use the same coordinates $(x,y,z),$ taking
into account that $c=1$ in the relations that we have occured. From $\frac{%
\partial a}{\partial x}=0,$ $\frac{\partial a}{\partial y}=-2$ we obtain 
\begin{equation*}
a=a(x,y,z)=-2y+f(z),
\end{equation*}%
where $f(z)$ is smooth function of $z$ defined on $V$. Differentiating $%
\tilde{\lambda}$ with respect to $X$ and using (\ref{eq4.5}) and (\ref%
{eqlamda}) we have%
\begin{equation}
A=r^{\prime }(z),  \label{eq4.8}
\end{equation}%
where $r^{\prime }(z)=\frac{dr}{dz}$. By using the relations $\frac{\partial
b}{\partial x}=-2\tilde{\lambda}$, $\frac{\partial b}{\partial y}=-\frac{A}{2%
\tilde{\lambda}}$ and (\ref{eqlamda}) we get 
\begin{equation*}
b=-\frac{y}{2}\frac{r^{\prime }(z)}{r(z)}-2xr(z)+s(z).
\end{equation*}%
where $s(z)$ is arbitrary smooth function of $z$ defined on $V$. We will
calculate the tensor fields $\eta $, $\tilde{\varphi}$, $\tilde{g}$ and $%
\tilde{h}$ with respect to the basis $\frac{\partial }{\partial x}$, $\frac{%
\partial }{\partial y}$, $\frac{\partial }{\partial z}$. For the components $%
\tilde{g}_{ij}$ of the Riemannian metric, using (\ref{eq4.4}) we have 
\begin{equation*}
\tilde{g}_{11}=\tilde{g}(\frac{\partial }{\partial x},\frac{\partial }{%
\partial x})=1,\text{ }\tilde{g}(\xi ,\xi )=1,\text{ \ }\tilde{g}_{22}=%
\tilde{g}(\frac{\partial }{\partial y},\frac{\partial }{\partial y})=\tilde{g%
}(\tilde{\varphi}X,\tilde{\varphi}X)=1,\text{ }
\end{equation*}%
\begin{equation*}
\text{\ \ }\tilde{g}_{12}=\tilde{g}_{21}=\tilde{g}(\frac{\partial }{\partial
x},\frac{\partial }{\partial y})=0,
\end{equation*}%
\begin{eqnarray*}
\tilde{g}_{13} &=&\tilde{g}_{31}=\tilde{g}(\frac{\partial }{\partial x},X-a%
\frac{\partial }{\partial x}-b\frac{\partial }{\partial y}) \\
&=&\tilde{g}(\xi ,X)-a\tilde{g}_{11}=-a,
\end{eqnarray*}%
\begin{eqnarray*}
\tilde{g}_{23} &=&\tilde{g}_{32}=\tilde{g}(\frac{\partial }{\partial y},X-a%
\frac{\partial }{\partial x}-b\frac{\partial }{\partial y}) \\
&=&\tilde{g}(\tilde{\varphi}X,X)-a\tilde{g}_{12}-b\tilde{g}_{22}=-b,
\end{eqnarray*}%
\begin{eqnarray*}
-1 &=&\tilde{g}(X,X)\Rightarrow a^{2}\tilde{g}_{11}+2a\tilde{g}_{13}+b^{2}%
\tilde{g}_{22}+2ab\tilde{g}_{12}+2b\tilde{g}_{23}+\tilde{g}_{33}=-1 \\
&=&a^{2}-2a^{2}+b^{2}-2b^{2}+\tilde{g}_{33}=\tilde{g}_{33}-a^{2}-b^{2},
\end{eqnarray*}%
from which we obtain $\tilde{g}_{33}=-1+a^{2}+b^{2}$.

The matrix form of $\tilde{g}$ is$\ $given$\ $by%
\begin{equation*}
\tilde{g}=\left( 
\begin{array}{ccc}
1 & 0 & -a \\ 
0 & 1 & -b \\ 
-a & -b & -1+a^{2}+b^{2}%
\end{array}%
\right) .
\end{equation*}

The components of the tensor field $\tilde{\varphi}$ are immediate
consequences of 
\begin{equation*}
\tilde{\varphi}(\xi )=\tilde{\varphi}(\frac{\partial }{\partial x})=0,\text{
\ \ }\tilde{\varphi}(\frac{\partial }{\partial y})=X=a\frac{\partial }{%
\partial x}+b\frac{\partial }{\partial y}+\frac{\partial }{\partial z},
\end{equation*}%
\begin{eqnarray*}
\tilde{\varphi}(\frac{\partial }{\partial z}) &=&\tilde{\varphi}(X-a\frac{%
\partial }{\partial x}-b\frac{\partial }{\partial y})=\tilde{\varphi}X-a%
\tilde{\varphi}(\frac{\partial }{\partial x})-b\tilde{\varphi}(\frac{%
\partial }{\partial y}) \\
&=&\tilde{\varphi}X-b(a\frac{\partial }{\partial x}+b\frac{\partial }{%
\partial y}+\frac{\partial }{\partial z}) \\
&=&\frac{\partial }{\partial y}-ab\frac{\partial }{\partial x}-b^{2}\frac{%
\partial }{\partial y}-b\frac{\partial }{\partial z} \\
&=&-ab\frac{\partial }{\partial x}+(1-b^{2})\frac{\partial }{\partial y}-b%
\frac{\partial }{\partial z}.
\end{eqnarray*}%
The matrix form of $\tilde{\varphi}$ is$\ $given$\ $by%
\begin{equation*}
\tilde{\varphi}=\left( 
\begin{array}{ccc}
0 & a & -ab \\ 
0 & b & 1-b^{2} \\ 
0 & c & -b%
\end{array}%
\right) .
\end{equation*}%
The expression of the 1-form $\eta $, immediately follows from $\eta (\xi
)=1 $, $\eta (X)=\eta (\tilde{\varphi}X)=0$%
\begin{equation*}
\eta =dx-adz.
\end{equation*}%
Now we calculate the components of the tensor field $\tilde{h}$ with respect
to the basis $\frac{\partial }{\partial x}$, $\frac{\partial }{\partial y}$, 
$\frac{\partial }{\partial z}$.%
\begin{equation*}
\tilde{h}(\xi )=\tilde{h}(\frac{\partial }{\partial x})=0,\text{ \ \ }\tilde{%
h}(\frac{\partial }{\partial y})=-\lambda \frac{\partial }{\partial y},
\end{equation*}%
\begin{eqnarray*}
\tilde{h}(\frac{\partial }{\partial z}) &=&\tilde{h}(X-a\frac{\partial }{%
\partial x}-b\frac{\partial }{\partial y}) \\
&=&\tilde{h}X-a\tilde{h}(\frac{\partial }{\partial x})-b\tilde{h}(\frac{%
\partial }{\partial y}) \\
&=&\tilde{\lambda}X+b\tilde{\lambda}\frac{\partial }{\partial y} \\
&=&\tilde{\lambda}(a\frac{\partial }{\partial x}+b\frac{\partial }{\partial y%
}+\frac{\partial }{\partial z})+b\tilde{\lambda}\frac{\partial }{\partial y},
\end{eqnarray*}%
\begin{equation*}
\tilde{h}(\frac{\partial }{\partial z})=\tilde{\lambda}a\frac{\partial }{%
\partial x}+2b\tilde{\lambda}\frac{\partial }{\partial y}+\tilde{\lambda}%
\frac{\partial }{\partial z}.
\end{equation*}%
The matrix form of $\tilde{h}$ is$\ $given$\ $by%
\begin{equation*}
\tilde{h}=\left( 
\begin{array}{ccc}
0 & 0 & a\tilde{\lambda} \\ 
0 & -\tilde{\lambda} & 2\tilde{\lambda}b \\ 
0 & 0 & \tilde{\lambda}%
\end{array}%
\right) .
\end{equation*}%
Thus the proof of the Case 1 is completed.

\textit{Case 2.} $\tilde{\mu}=2(1-\sqrt{1+\tilde{\kappa}}).$

As in the Case 1, we consider an $\tilde{h}$-frame $\{\xi ,X,\tilde{\varphi}%
X\}$. Using the assumption $\tilde{\mu}=2(1-\sqrt{1+\tilde{\kappa}})$ and (%
\ref{zetaAB2}) we obtain $A=0$ and thus the relation (\ref{eq5}) and (\ref%
{eq6}) is written as%
\begin{equation}
\lbrack \xi ,X]=0,  \label{eq4.9}
\end{equation}%
\begin{equation}
\text{\ }[\xi ,\tilde{\varphi}X]=2\tilde{\lambda}X,\text{ \ }  \label{eq4.10}
\end{equation}%
\begin{equation}
\text{\ }[X,\tilde{\varphi}X]=-\frac{B}{2\tilde{\lambda}}X+2\xi .
\label{eq4.11}
\end{equation}%
Because of (\ref{eq4.9}) we find that there is a chart $(V^{\prime
},(x,y,z)) $ such that 
\begin{equation*}
\xi =\frac{\partial }{\partial x},\text{ \ \ \ }X=\frac{\partial }{\partial y%
}
\end{equation*}%
on $V^{\prime }$. We put%
\begin{equation*}
\tilde{\varphi}X=a\frac{\partial }{\partial x}+b\frac{\partial }{\partial y}%
+c\frac{\partial }{\partial z},
\end{equation*}%
where $a,b,c$ are smooth functions defined on $V^{\prime }.$ As in the Case
1, we can directly calculate the tensor fields $\eta $, $\tilde{\varphi}$, $%
\tilde{g}$ and $\tilde{h}$ with respect to the basis $\frac{\partial }{%
\partial x}$, $\frac{\partial }{\partial y}$, $\frac{\partial }{\partial z}$%
. \ This completes the proof of the main theorem.
\end{proof}

Now, we give an example of a generalized $(\tilde{\kappa}\neq -1,\tilde{\mu})
$-paracontact metric manifold with $\xi (\tilde{\mu})=0$ which satisfy the
conditions of Main Theorem (Case 1).

\begin{example}
\label{ex1} We consider the $3$-dimensional manifold%
\begin{equation*}
M=\{(x,y,z)\in R^{3},z\neq 0\}
\end{equation*}%
and the vector fields%
\begin{equation*}
\xi =\text{ }\frac{\partial }{\partial x},\text{ \ \ }\tilde{\varphi}X=\frac{%
\partial }{\partial y},\text{ \ \ }X=(-2y+1)\frac{\partial }{\partial x}+(-%
\frac{y}{2z}-2xz+2)\frac{\partial }{\partial y}+\frac{\partial }{\partial z}.
\end{equation*}%
The 1-form $\eta =dx-(-2y+1)dz$ defines a contact structure on $M$ with
characteristic vector field $\xi =\frac{\partial }{\partial x}$. Let $\tilde{%
g}$, $\tilde{\varphi}$ be the pseudo-Riemannian metric and the $(1,1)$%
-tensor field given by 
\begin{eqnarray*}
\tilde{g} &=&\left( 
\begin{array}{ccc}
1 & 0 & 2y-1 \\ 
0 & 1 & \frac{y}{2z}+2xz-2 \\ 
2y-1 & \frac{y}{2z}+2xz-2 & -1+(-2y+1)^{2}+(-\frac{y}{2z}-2xz+2)^{2}%
\end{array}%
\right) ,\text{ } \\
\tilde{\varphi}\text{\ } &=&\left( 
\begin{array}{ccc}
0 & -2y+1 & -(-2y+1)(-\frac{y}{2z}-2xz+2) \\ 
0 & -\frac{y}{2z}-2xz+2 & 1-(-\frac{y}{2z}-2xz+2)^{2} \\ 
0 & 1 & \frac{y}{2z}+2xz-2%
\end{array}%
\right) , \\
\text{\ }\tilde{h}\text{\ } &=&\left( 
\begin{array}{ccc}
0 & 0 & (-2y+1)z \\ 
0 & -z & 2z(-\frac{y}{2z}-2xz+2) \\ 
0 & 0 & z%
\end{array}%
\right) ,\text{ \ \ }\tilde{\lambda}=z,
\end{eqnarray*}%
with respect to the basis $\frac{\partial }{\partial x},\frac{\partial }{%
\partial y},\frac{\partial }{\partial z}$.${\small \ }$
\end{example}

Let $\left\{ \xi ,X,\tilde{\varphi}X\right\} $ be an $\tilde{h}$-frame, such
that%
\begin{equation*}
\tilde{h}X=\tilde{\lambda}\tilde{\varphi}X,\text{ \ \ }\tilde{h}\tilde{%
\varphi}X=-\tilde{\lambda}X,\text{ \ \ \ }\tilde{\lambda}=\sqrt{-1-\tilde{%
\kappa}}
\end{equation*}%
in an appropriate neighbourhood of an arbitrary point of $M$. Using the
hypothesis $\xi (\tilde{\mu})=0$ and equations (\ref{eq8.8})-(\ref{figrad1})
and (\ref{grad1.1}) we have the following relations,%
\begin{equation}
(\tilde{\varphi}grad\lambda )\tilde{\mu}=2(A^{2}+B^{2}),  \label{eqlamda1.1}
\end{equation}%
\begin{equation}
\left[ \xi ,\tilde{\varphi}grad\lambda \right] \tilde{\mu}=0,
\label{eqlamda2.2}
\end{equation}%
\begin{equation}
\xi (A^{2}+B^{2})=0,  \label{zeta(AB)B}
\end{equation}%
\begin{equation}
A\xi A+B\xi B=0,  \label{zeta AB1.1}
\end{equation}%
\begin{equation}
-\tilde{\lambda}A^{2}+2AB(1-\frac{\tilde{\mu}}{2})+\tilde{\lambda}B^{2}=0%
\text{.}  \label{zetaAB2.2}
\end{equation}

Differentiating the relation (\ref{zetaAB2.2}) with respect to $\xi $ and
using the equations (\ref{zetak}), \ $\xi (\tilde{\mu})=0$, (\ref{eq10.10}),
(\ref{eq11.11}) and (\ref{zetaAB2.2}), we obtain%
\begin{equation}
(A(1-\frac{\tilde{\mu}}{2})+\tilde{\lambda}B)^{2}+(B(1-\frac{\tilde{\mu}}{2}%
)-\tilde{\lambda}A)^{2}=0  \label{zetaAB3.3}
\end{equation}%
From (\ref{zetaAB3.3}), precisely following cases occurs.%
\begin{eqnarray}
\bullet A &=&0\text{ and }B=0,  \label{A} \\
\bullet A &\neq &0\text{ and }\tilde{\lambda}^{2}+(1-\frac{\tilde{\mu}}{2}%
)^{2}=0,  \label{B} \\
\bullet B &\neq &0\text{ and }\tilde{\lambda}^{2}+(1-\frac{\tilde{\mu}}{2}%
)^{2}=0,  \label{C} \\
\bullet A &=&0\text{ and }\tilde{\lambda}^{2}+(1-\frac{\tilde{\mu}}{2}%
)^{2}\neq 0,  \label{D} \\
\bullet B &=&0\text{ and }\tilde{\lambda}^{2}+(1-\frac{\tilde{\mu}}{2}%
)^{2}\neq 0.  \label{E}
\end{eqnarray}%
We now check, case by case, whether (\ref{zetaAB3.3}) give rise to a local
classification of generalized ($\tilde{\kappa}\neq -1,\tilde{\mu})$%
-paracontact metric manifolds with $\tilde{\kappa}<-1$. From (\ref{A}) we
get $\tilde{\kappa}$ and $\tilde{\mu}$ constants. So the manifold returns to
a $(\tilde{\kappa},\tilde{\mu})$-paracontact metric manifold. But we want to
give a local classification for generalized $(\tilde{\kappa},\tilde{\mu})$%
-paracontact metric manifolds. So we omit this case. (\ref{B}) and (\ref{C})
hold if and only if $\tilde{\kappa}=-1$ and $\tilde{\mu}=2$. But this is a
contradiction with $\tilde{\kappa}<-1.$ If we use (\ref{D}) in (\ref%
{zetaAB3.3}) we obtain $B^{2}((1-\frac{\tilde{\mu}}{2})^{2}+\tilde{\lambda}%
^{2})=0$. But the solution of this equation contradicts with the type of
manifold and choosing of $\tilde{\kappa}.$

So we can give following corollary.

\begin{corollary}
There is not exist any generalized ($\tilde{\kappa}<-1,\tilde{\mu})$%
-paracontact metric manifolds which satisfy the condition $\xi (\tilde{\mu}%
)=0$.
\end{corollary}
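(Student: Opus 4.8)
\noindent The plan is to argue by contradiction: suppose such a manifold $M$ exists with $\tilde\kappa < -1$ everywhere and $\xi(\tilde\mu)=0$. First I would fix an arbitrary point of $M$ and invoke Lemma~\ref{L3} to obtain an $\tilde h$-frame $\{\xi,X,\tilde\varphi X\}$ on a neighbourhood, with $\tilde\lambda=\sqrt{-1-\tilde\kappa}$, $A=X\tilde\lambda$, $B=\tilde\varphi X\tilde\lambda$; note that $\tilde\lambda$ never vanishes in this regime. Feeding the hypothesis $\xi(\tilde\mu)=0$ into the structure identities (\ref{eq8.8})--(\ref{figrad1}) and the gradient formula (\ref{grad1.1}) yields a short list of auxiliary relations, of which the one that drives the whole argument is the pointwise quadratic constraint (\ref{zetaAB2.2}), $-\tilde\lambda A^{2}+2AB(1-\tfrac{\tilde\mu}{2})+\tilde\lambda B^{2}=0$.

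Next I would differentiate (\ref{zetaAB2.2}) in the direction of $\xi$. Here one uses $\xi(\tilde\kappa)=0$ from (\ref{zetak}) — which, since $\tilde\lambda\neq0$, forces $\xi(\tilde\lambda)=0$ — together with $\xi(\tilde\mu)=0$, the explicit expressions (\ref{eq10.10}) and (\ref{eq11.11}) for $\xi(A)$ and $\xi(B)$, and finally (\ref{zetaAB2.2}) again to eliminate the surviving quadratic term. After the routine algebra the result is that the sum of two real squares $\bigl(A(1-\tfrac{\tilde\mu}{2})+\tilde\lambda B\bigr)^{2}+\bigl(B(1-\tfrac{\tilde\mu}{2})-\tilde\lambda A\bigr)^{2}$ vanishes identically, which is (\ref{zetaAB3.3}). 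Since both summands are squares, each vanishes pointwise; equivalently, at every point the vector $(A,B)$ lies in the kernel of the matrix $\left(\begin{smallmatrix}1-\tilde\mu/2 & \tilde\lambda\\ -\tilde\lambda & 1-\tilde\mu/2\end{smallmatrix}\right)$, whose determinant is $\tilde\lambda^{2}+(1-\tfrac{\tilde\mu}{2})^{2}\ge 0$.

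It then remains to run the case analysis exactly as listed in (\ref{A})--(\ref{E}). If at some point $\tilde\lambda^{2}+(1-\tfrac{\tilde\mu}{2})^{2}\neq0$, the kernel is trivial and $A=B=0$ there; by continuity and a connectedness argument in the spirit of the Main Theorem (Theorem~\ref{AX}) — splitting $M$ into the locus where $\operatorname{grad}\tilde\lambda$ vanishes and its complement — this forces $\tilde\lambda$, hence $\tilde\kappa$, and then via (\ref{eq8.8})--(\ref{eq9.9}) together with $\xi(\tilde\mu)=0$ the function $\tilde\mu$ as well, to be constant on $M$; but then $M$ is an honest $(\tilde\kappa,\tilde\mu)$-paracontact metric manifold, not a \emph{generalized} one, contrary to hypothesis. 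The only alternative is $\tilde\lambda^{2}+(1-\tfrac{\tilde\mu}{2})^{2}\equiv0$, which forces $\tilde\lambda=0$ and $\tilde\mu=2$; but $\tilde\lambda=\sqrt{-1-\tilde\kappa}=0$ gives $\tilde\kappa=-1$, contradicting $\tilde\kappa<-1$. Either branch ends in a contradiction, so no such manifold exists. The step I expect to be the main obstacle is the bookkeeping in the case split — in particular verifying that cases (\ref{D}) and (\ref{E}) genuinely collapse back to $A=B=0$ (by substituting them into (\ref{zetaAB3.3}) and using $\tilde\kappa<-1$) rather than producing a new solution branch, and making the passage from "$A=B=0$ on an open set" to "$\tilde\kappa,\tilde\mu$ constant on all of $M$" airtight, which is where the connectedness argument reused from the Main Theorem is essential.
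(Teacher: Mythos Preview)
Your proposal is correct and follows essentially the same route as the paper: derive the quadratic constraint (\ref{zetaAB2.2}), differentiate along $\xi$ to obtain the sum-of-squares identity (\ref{zetaAB3.3}), and then eliminate all cases. Your matrix-kernel phrasing of the case split is a tidy repackaging of the paper's enumeration (\ref{A})--(\ref{E}), and you are more explicit than the paper about the local-to-global passage from $A=B=0$ to constancy of $\tilde\kappa,\tilde\mu$; but the underlying argument is the same.
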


In the following theorem, we will locally construct generalized $(\tilde{%
\kappa},\tilde{\mu})$-paracontact metric manifolds with $\tilde{\kappa}>-1$
and $\xi (\tilde{\mu})=0$.

\begin{theorem}
\label{important} Let $\tilde{\kappa}$ $:I\subset 
%TCIMACRO{\U{211d} }%
%BeginExpansion
\mathbb{R}
%EndExpansion
\rightarrow 
%TCIMACRO{\U{211d} }%
%BeginExpansion
\mathbb{R}
%EndExpansion
$ be a smooth function defined on an open interval $I$, such that $\tilde{%
\kappa}(z)>-1$ for any $z\in I$. Then, we can construct two families of
generalized $(\tilde{\kappa}_{i},\tilde{\mu}_{i})$-paracontact metric
manifolds $(M,\tilde{\varphi}_{i},\xi _{i},\eta _{i},\tilde{g}_{i})$, $i=1,2$%
, in the set $M=%
%TCIMACRO{\U{211d} }%
%BeginExpansion
\mathbb{R}
%EndExpansion
^{2}\times I\subset 
%TCIMACRO{\U{211d} }%
%BeginExpansion
\mathbb{R}
%EndExpansion
^{3}$, so that, for any $P(x,y,z)\in M$, the following are valid:%
\begin{eqnarray*}
\tilde{\kappa}_{1}(P) &=&\tilde{\kappa}_{2}(P)=\tilde{\kappa}(z),\text{ \ }
\\
\text{\ }\tilde{\mu}_{1}(P) &=&2(1+\sqrt{1+\tilde{\kappa}(z)})\text{ \ \ and
\ \ }\tilde{\mu}_{2}(P)=2(1-\sqrt{1+\tilde{\kappa}(z)})
\end{eqnarray*}

Each family is determined by two arbitrary smooth functions of one variable.

\begin{proof}
We put $\tilde{\lambda}(z)=\sqrt{1+\tilde{\kappa}(z)}>0$ and \ consider on $M
$ the linearly independent vector fields%
\begin{equation}
\xi _{1}=\frac{\partial }{\partial x},\text{ \ \ }X_{1}=a\frac{\partial }{%
\partial x}+b\frac{\partial }{\partial y}+\frac{\partial }{\partial z}\text{%
and }Y_{1}=\frac{\partial }{\partial y}\text{ },  \label{eq4.12}
\end{equation}%
where $a(x,y,z)=-2y+f(z)$, $b(x,y,z)=-\frac{y}{2}\frac{\tilde{\lambda}%
^{\prime }(z)}{\tilde{\lambda}(z)}-2x\tilde{\lambda}(z)+s(z)$, $f(z),$ $s(z)$
are arbitrary smooth functions of $z$ and $-\tilde{g}_{1}(X_{1},X_{1})=%
\tilde{g}_{1}(Y_{1},Y_{1})=\tilde{g}_{1}(\xi _{1},\xi _{1})=1.$The structure
tensor fields $\eta _{1},\tilde{g}_{1},\tilde{\varphi}_{1}$ are defined by $%
\eta _{1}=$ $dx-(-2y+f(z))dz$, $\tilde{g}_{1}=\left( 
\begin{array}{ccc}
1 & 0 & -a \\ 
0 & 1 & -b \\ 
-a & -b & -1+a^{2}+b^{2}%
\end{array}%
\right) $ and $\tilde{\varphi}_{1}=\left( 
\begin{array}{ccc}
0 & a & -ab \\ 
0 & b & 1-b^{2} \\ 
0 & 1 & -b%
\end{array}%
\right) $, respectively. From (\ref{eq4.12}), we can easily obtain 
\begin{eqnarray}
\lbrack \xi _{1},X_{1}] &=&-2\tilde{\lambda}(z)Y_{1}\text{, \ \ }[\xi
_{1},Y_{1}]=0\text{, \ \ }  \label{4.12a} \\
\lbrack X_{1},Y_{1}] &=&\frac{\tilde{\lambda}^{\prime }(z)}{2\tilde{\lambda}%
(z)}Y_{1}+2\xi _{1}.  \label{4.12b}
\end{eqnarray}%
Since $\eta _{1}\wedge d\eta _{1}=2dx\wedge dy\wedge dz\neq 0$ everywhere on 
$M$, we conclude that $\eta _{1}$ is a contact form. By using just defined $%
\tilde{g}_{1}$ and $\tilde{\varphi}_{1}$, we find\ $\eta _{1}=\tilde{g}%
(.,\xi _{1}),$ $\tilde{\varphi}_{1}X_{1}=Y_{1}$, $\tilde{\varphi}%
_{1}Y_{1}=X_{1}$, $\tilde{\varphi}_{1}\xi _{1}=0$ and $d\eta _{1}(Z,W)=%
\tilde{g}_{1}(Z,\tilde{\varphi}_{1}W)$, $\tilde{g}_{1}(\tilde{\varphi}_{1}Z,%
\tilde{\varphi}_{1}W)=-\tilde{g}_{1}(Z,W)+\eta _{1}(Z)\eta _{1}(W)$ for any $%
Z$, $W\in \Gamma (M)$. Hence $M(\eta _{1},\xi _{1},\tilde{\varphi}_{1},%
\tilde{g}_{1})$ is a paracontact metric manifold. From the well known
Koszul's formula $2\tilde{g}_{1}(\tilde{\nabla}_{Z}W,T)=Z\tilde{g}_{1}(W,T)+W%
\tilde{g}_{1}(T,Z)-T\tilde{g}_{1}(Z,W)-\tilde{g}_{1}(Z,\left[ W,T\right] )+%
\tilde{g}_{1}(W,\left[ T,Z\right] )+\tilde{g}_{1}(T,\left[ Z,W\right] )$ and
(\ref{nablaxi}), we have the following equations%
\begin{equation}
\tilde{\nabla}_{X_{1}}\xi _{1}=(\tilde{\lambda}(z)-1)Y_{1},\text{ \ \ }%
\tilde{\nabla}_{Y_{1}}\xi _{1}=-(1+\tilde{\lambda}(z))X_{1},  \label{eq4.13}
\end{equation}%
\begin{equation}
\tilde{\nabla}_{\xi _{1}}\xi _{1}=0,\text{ \ \ }\tilde{\nabla}_{\xi
_{1}}X_{1}=-(\tilde{\lambda}(z)+1)Y_{1},\text{ \ \ }\tilde{\nabla}_{\xi
_{1}}Y_{1}=-(1+\tilde{\lambda}(z))X_{1},  \label{eq414}
\end{equation}%
\begin{equation}
\tilde{\nabla}_{X_{1}}X_{1}=0,\text{ \ \ }\tilde{\nabla}_{Y_{1}}Y_{1}=\frac{%
\tilde{\lambda}^{\prime }(z)}{2\tilde{\lambda}(z)}X_{1},  \label{eq4.15}
\end{equation}%
\begin{eqnarray}
\tilde{\nabla}_{Y_{1}}X_{1} &=&-\frac{\tilde{\lambda}^{\prime }(z)}{2\tilde{%
\lambda}(z)}Y_{1}-(1+\tilde{\lambda}(z))\xi _{1},\text{ \ \ }  \label{eq4.16}
\\
\tilde{\nabla}_{X_{1}}Y_{1} &=&(-\tilde{\lambda}(z)+1)\xi _{1},
\label{eq4.17}
\end{eqnarray}%
$\tilde{h}_{1}\tilde{\varphi}_{1}X_{1}=-\tilde{\lambda}(z)\tilde{\varphi}%
_{1}X_{1}$ and $\tilde{h}_{1}X_{1}=\tilde{\lambda}(z)X_{1}$, where $\tilde{%
\nabla}$ is Levi-Civita connection of $\tilde{g}_{1}$. By using \ the
relations (\ref{eq4.13})-(\ref{eq4.17}) we obtain%
\begin{eqnarray*}
\tilde{R}(\xi _{1},\xi _{1})\xi _{1} &=&0, \\
\tilde{R}(X_{1},\xi _{1})\xi _{1} &=&\tilde{\kappa}_{1}X_{1}+\tilde{\mu}_{1}%
\tilde{h}_{1}X_{1}, \\
\text{\ }\tilde{R}(Y_{1},\xi _{1})\xi _{1} &=&\tilde{\kappa}_{1}Y_{1}+\tilde{%
\mu}_{1}\tilde{h}_{1}Y_{1}, \\
\tilde{R}(X_{1},X_{1})\xi _{1} &=&0,~\tilde{R}(Y_{1},Y_{1})\xi _{1}=0 \\
\tilde{R}(X_{1,}Y_{1})\xi _{1} &=&0.
\end{eqnarray*}%
From the above relations and by virtue of the linearity of \ the curvature
tensor $\tilde{R}$, we conclude that 
\begin{equation*}
\tilde{R}(Z,W)\xi _{1}=(\tilde{\kappa}_{1}I+\tilde{\mu}_{1}\tilde{h}%
_{1})(\eta _{1}(Z)W-\eta _{1}(W)Z)
\end{equation*}%
for any $Z,W\in \Gamma (M),$ i.e. $(M,\tilde{\varphi}_{1},\xi _{1},\eta _{1},%
\tilde{g}_{1})$ is a generalized $(\tilde{\kappa}_{1},\tilde{\mu}_{1})$%
-paracontact metric manifold with $\xi (\tilde{\mu}_{1})=0$ and thus the
construction of the first \ family is completed. For the second
construction, we consider the vector fields 
\begin{equation}
\xi _{2}=\frac{\partial }{\partial x},\text{ \ \ }X_{2}=\frac{\partial }{%
\partial y}\text{ },  \label{i1}
\end{equation}%
\begin{equation}
Y_{2}=(2y+f(z))\frac{\partial }{\partial x}+(-\frac{y}{2}\frac{\tilde{\lambda%
}(z)}{\tilde{\lambda}(z)}-2x\tilde{\lambda}(z)+s(z))\frac{\partial }{%
\partial y}+\frac{\partial }{\partial z}  \label{i2}
\end{equation}%
and define the tensor fields $\eta _{2},\tilde{g}_{2},\tilde{\varphi}_{2},%
\tilde{h}_{2}$ as follows:%
\begin{equation*}
\eta _{2}=dx-(2y+f(z))dz
\end{equation*}%
\begin{equation*}
\text{\ }\tilde{g}_{2}=\left( 
\begin{array}{ccc}
1 & 0 & -a \\ 
0 & -1 & -b \\ 
-a & -b & 1+a^{2}+b^{2}%
\end{array}%
\right) ,\text{ }\tilde{\varphi}_{2}=\left( 
\begin{array}{ccc}
0 & a & -ab \\ 
0 & b & 1-b^{2} \\ 
0 & 1 & -b%
\end{array}%
\right) ,
\end{equation*}%
\begin{equation*}
\text{\ \ \ \ }\tilde{h}_{2}=\left( 
\begin{array}{ccc}
0 & 0 & -a\tilde{\lambda}_{2} \\ 
0 & \tilde{\lambda}_{2} & -2\tilde{\lambda}_{2}b \\ 
0 & 0 & -\tilde{\lambda}_{2}%
\end{array}%
\right) 
\end{equation*}%
with respect to the basis $\left( \frac{\partial }{\partial x},\frac{%
\partial }{\partial y},\frac{\partial }{\partial z}\right) ,$ where $%
a=2y+f(z)$, $b=(-\frac{y}{2}\frac{\tilde{\lambda}^{^{\prime }}(z)}{\tilde{%
\lambda}(z)}-2x\tilde{\lambda}(z)+s(z))$ and $-\tilde{g}_{2}(X_{2},X_{2})=%
\tilde{g}_{2}(Y_{2},Y_{2})=\tilde{g}_{2}(\xi _{2},\xi _{2})=1$ . As in first
construction, we say that $(M,\tilde{\varphi}_{2},\xi _{2},\eta _{2},\tilde{g%
}_{2})$ is a generalized $(\tilde{\kappa}_{2},\tilde{\mu}_{2})$-paracontact
metric manifold with $\xi (\tilde{\mu}_{2})=0$, where $\tilde{\kappa}_{2}(z)=%
\tilde{\lambda}(z)^{2}-1$ and $\tilde{\mu}_{2}(x,y,z)=2(1-\sqrt{1+\kappa
_{2}(z)})$. This completes the proof of the theorem.
\end{proof}
\end{theorem}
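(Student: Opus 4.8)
The plan is to carry out an explicit construction in $M=\mathbb{R}^{2}\times I$, verifying directly that the prescribed data define a paracontact metric structure whose curvature satisfies the nullity condition (\ref{paranullity}) with the claimed functions $\tilde{\kappa}_{i},\tilde{\mu}_{i}$. First I would set $\tilde{\lambda}(z)=\sqrt{1+\tilde{\kappa}(z)}>0$ and introduce the frame $\{\xi_{1},X_{1},Y_{1}\}$ of (\ref{eq4.12}), together with the tensors $\eta_{1},\tilde{g}_{1},\tilde{\varphi}_{1}$ modeled on the matrices produced in Case 1 of the Main Theorem. The first task is purely computational: from the coordinate expressions of $a,b$ compute the Lie brackets $[\xi_{1},X_{1}]$, $[\xi_{1},Y_{1}]$, $[X_{1},Y_{1}]$, obtaining (\ref{4.12a})--(\ref{4.12b}). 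Then one checks the algebraic compatibility conditions: $\tilde{\varphi}_{1}^{2}=I-\eta_{1}\otimes\xi_{1}$, $\eta_{1}(\xi_{1})=1$, the metric compatibility $\tilde{g}_{1}(\tilde{\varphi}_{1}Z,\tilde{\varphi}_{1}W)=-\tilde{g}_{1}(Z,W)+\eta_{1}(Z)\eta_{1}(W)$, and the contact condition $d\eta_{1}(Z,W)=\tilde{g}_{1}(Z,\tilde{\varphi}_{1}W)$; the latter follows most cleanly from $\eta_{1}\wedge d\eta_{1}=2\,dx\wedge dy\wedge dz\neq 0$ and a check on the frame. This establishes that $(M,\tilde{\varphi}_{1},\xi_{1},\eta_{1},\tilde{g}_{1})$ is a paracontact metric manifold.

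Next I would compute the Levi-Civita connection via Koszul's formula, using the brackets just found and the values $\tilde{g}_{1}(X_{1},X_{1})=-1$, $\tilde{g}_{1}(Y_{1},Y_{1})=\tilde{g}_{1}(\xi_{1},\xi_{1})=1$, to obtain the covariant derivatives (\ref{eq4.13})--(\ref{eq4.17}); from $\tilde{\nabla}\xi_{1}=-\tilde{\varphi}_{1}+\tilde{\varphi}_{1}\tilde{h}_{1}$ (equation (\ref{nablaxi})) one then reads off $\tilde{h}_{1}X_{1}=\tilde{\lambda}(z)X_{1}$, $\tilde{h}_{1}Y_{1}=-\tilde{\lambda}(z)Y_{1}$, $\tilde{h}_{1}\xi_{1}=0$, matching the claimed matrix for $\tilde{h}_{1}$. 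Then I would plug these into the curvature formula $\tilde{R}(Z,W)\xi_{1}=\tilde{\nabla}_{Z}\tilde{\nabla}_{W}\xi_{1}-\tilde{\nabla}_{W}\tilde{\nabla}_{Z}\xi_{1}-\tilde{\nabla}_{[Z,W]}\xi_{1}$ for the three pairs from the frame, getting $\tilde{R}(X_{1},\xi_{1})\xi_{1}=\tilde{\kappa}_{1}X_{1}+\tilde{\mu}_{1}\tilde{h}_{1}X_{1}$, $\tilde{R}(Y_{1},\xi_{1})\xi_{1}=\tilde{\kappa}_{1}Y_{1}+\tilde{\mu}_{1}\tilde{h}_{1}Y_{1}$, and $\tilde{R}(X_{1},Y_{1})\xi_{1}=0$, where the identification forces $\tilde{\kappa}_{1}(z)=\tilde{\lambda}(z)^{2}-1=\tilde{\kappa}(z)$ and $\tilde{\mu}_{1}(z)=2(1+\tilde{\lambda}(z))=2(1+\sqrt{1+\tilde{\kappa}(z)})$. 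Tensoriality (bilinearity and the fact that $\tilde{R}(\cdot,\cdot)\xi_{1}$ annihilates a vector proportional to $\xi_{1}$ in either slot) then upgrades these three frame identities to (\ref{paranullity}) for all $Z,W$, so the first family is a generalized $(\tilde{\kappa}_{1},\tilde{\mu}_{1})$-paracontact metric manifold with $\xi(\tilde{\mu}_{1})=0$ since $\tilde{\mu}_{1}$ depends only on $z$ and $\xi_{1}=\partial/\partial x$.

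For the second family I would repeat the same scheme with $\{\xi_{2},X_{2},Y_{2}\}$ of (\ref{i1})--(\ref{i2}) and the tensors $\eta_{2},\tilde{g}_{2},\tilde{\varphi}_{2},\tilde{h}_{2}$ listed above, noting that now $X_{2}=\partial/\partial y$ satisfies $[\xi_{2},X_{2}]=0$ while $[\xi_{2},Y_{2}]=2\tilde{\lambda}(z)X_{2}$; the analogous Koszul and curvature computations yield $\tilde{\kappa}_{2}(z)=\tilde{\lambda}(z)^{2}-1=\tilde{\kappa}(z)$ and $\tilde{\mu}_{2}(z)=2(1-\tilde{\lambda}(z))=2(1-\sqrt{1+\tilde{\kappa}(z)})$. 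Finally, each construction visibly depends on the two arbitrary smooth functions $f(z)$ and $s(z)$ of one variable, giving the last assertion. I expect the main obstacle to be bookkeeping rather than conceptual: keeping track of signs throughout the Koszul computation because of the indefinite metric (one basis vector has negative square norm), and confirming that the off-diagonal terms in $\tilde{R}(X_{i},Y_{i})\xi_{i}$ genuinely cancel — this cancellation is what pins down the precise form of $b$ (in particular the coefficient $-\tfrac{1}{2}\tilde{\lambda}'/\tilde{\lambda}$), so the calculation must be done carefully rather than sketched.
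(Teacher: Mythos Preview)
Your proposal is correct and follows essentially the same approach as the paper's own proof: set up the explicit frame and structure tensors, compute the Lie brackets, verify the paracontact metric axioms (including $\eta_1\wedge d\eta_1\neq 0$), obtain the Levi-Civita connection via Koszul's formula and read off $\tilde{h}_1$ from (\ref{nablaxi}), then evaluate $\tilde{R}(\cdot,\cdot)\xi_1$ on the frame and extend by linearity; the second family is handled identically. The paper carries out exactly these steps in the same order, so there is no substantive difference in method.
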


In the following theorem, we give an analytic expression of the scalar
curvature $\tau $ of generalized $(\tilde{\kappa}\neq -1,\tilde{\mu})$%
-paracontact metric manifolds. It is interesting that the same formula holds
both for the case $\tilde{\kappa}<-1$ and $\tilde{\kappa}>-1.$

\begin{theorem}
\label{LAPLAS}Let $(M,\tilde{\varphi},\xi ,\eta ,\tilde{g})$ be a
generalized $(\tilde{\kappa},\tilde{\mu})$-paracontact metric manifold.
Then, 
\begin{equation}
\bigtriangleup \tilde{\lambda}=-X(A)+\tilde{\varphi}X(B)+\frac{1}{2\tilde{%
\lambda}}(A^{2}-B^{2})  \label{m}
\end{equation}%
and 
\begin{equation}
\tau =\frac{1}{\tilde{\lambda}}(\bigtriangleup \tilde{\lambda})-\frac{1}{%
\tilde{\lambda}^{2}}\parallel grad\tilde{\lambda}\parallel ^{2}+2(\tilde{%
\kappa}+\tilde{\mu}),  \label{mm}
\end{equation}%
where $\bigtriangleup \tilde{\lambda}$ is Laplacian of $\tilde{\lambda}$.
\end{theorem}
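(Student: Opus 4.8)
The plan is to derive both formulas from the structure equations of Lemma~\ref{L2} (the case $\tilde\kappa>-1$) and Lemma~\ref{L3} (the case $\tilde\kappa<-1$), working in the respective $\tilde h$-frame $\{\xi,X,\tilde\varphi X\}$, and to exploit the expression \eqref{q1,q3} for the Ricci operator together with the general identity $\tfrac12\operatorname{grad}\tau=\sum_i\varepsilon_i(\tilde\nabla_{X_i}\tilde Q)X_i$ that was already used in the proof of Lemma~\ref{L2}. First I would compute $\bigtriangleup\tilde\lambda=\operatorname{div}(\operatorname{grad}\tilde\lambda)$ directly. Using \eqref{zetak}, which gives $\xi(\tilde\lambda)=0$, one has $\operatorname{grad}\tilde\lambda=A\,\varepsilon_X X+B\,\varepsilon_{\tilde\varphi X}\tilde\varphi X$ (with the appropriate signs $\varepsilon_X,\varepsilon_{\tilde\varphi X}$ depending on whether $\tilde\kappa\gtrless-1$); then $\bigtriangleup\tilde\lambda=\sum_i\varepsilon_i\tilde g(\tilde\nabla_{X_i}\operatorname{grad}\tilde\lambda,X_i)$. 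Expanding this using the covariant-derivative formulas \eqref{eq1}--\eqref{eq4} (resp. \eqref{eq1.1}--\eqref{eq44}), $A=X\tilde\lambda$, $B=\tilde\varphi X\tilde\lambda$, $\xi(A),\xi(B)$ from \eqref{eq10},\eqref{eq11} (resp. \eqref{eq10.10},\eqref{eq11.11}), the connection coefficients involving $A/(2\tilde\lambda)$ and $B/(2\tilde\lambda)$ produce exactly the term $\tfrac1{2\tilde\lambda}(A^2-B^2)$, while the $\xi$-direction contributes nothing since $\operatorname{grad}\tilde\lambda\perp\xi$ and $\xi(\tilde\lambda)=0$. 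This yields \eqref{m}; I expect the two cases to conspire so that the same final formula emerges despite the different intermediate signs.

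Next, for \eqref{mm}, I would take the divergence-type trace of \eqref{q1,q3}. Applying $\sum_i\varepsilon_i(\tilde\nabla_{X_i}\tilde Q)X_i=\tfrac12\operatorname{grad}\tau$ to $\tilde Q=(\tfrac\tau2-\tilde\kappa)I+(-\tfrac\tau2+3\tilde\kappa)\eta\otimes\xi+\tilde\mu\tilde h$ and using $\sum_i\varepsilon_i(\tilde\nabla_{X_i}\tilde h)X_i=0$ (established inside the proof of Lemma~\ref{L2} from \eqref{hfrme},\eqref{eq3},\eqref{eq4}) reproduces \eqref{eq16}: $-\operatorname{grad}\tilde\kappa+\tilde h\operatorname{grad}\tilde\mu-\tfrac12\xi(\tau)\xi=0$, hence $\xi(\tau)=0$. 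The real content comes from a second contraction — namely computing the scalar curvature itself as $\tau=\sum_{i,j}\varepsilon_i\varepsilon_j\tilde g(\tilde R(X_i,X_j)X_j,X_i)$, or more efficiently $\tau=\operatorname{tr}\tilde Q$. But $\operatorname{tr}\tilde Q$ from \eqref{q1,q3} only gives $\tau$ back tautologically, so instead I would compute one genuinely new curvature component, e.g. $\tilde g(\tilde R(X,\tilde\varphi X)\tilde\varphi X,X)$, directly from the bracket relations \eqref{eq5},\eqref{eq6} and the connection \eqref{eq3},\eqref{eq4} via $\tilde R(X,Y)Z=\tilde\nabla_X\tilde\nabla_Y Z-\tilde\nabla_Y\tilde\nabla_X Z-\tilde\nabla_{[X,Y]}Z$. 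Combining this sectional-curvature computation with the already-known values $\tilde g(\tilde R(X,\xi)\xi,X)$ and $\tilde g(\tilde R(\tilde\varphi X,\xi)\xi,\tilde\varphi X)$ coming from \eqref{paranullity}, and summing with signs to form $\tau$, should isolate the combination $-X(A)+\tilde\varphi X(B)$ plus lower-order terms. Comparing with \eqref{m} then lets me substitute $-X(A)+\tilde\varphi X(B)=\bigtriangleup\tilde\lambda-\tfrac1{2\tilde\lambda}(A^2-B^2)$, and since $\|\operatorname{grad}\tilde\lambda\|^2=\varepsilon_X A^2+\varepsilon_{\tilde\varphi X}B^2$ equals $A^2-B^2$ (up to the overall sign handled by the $\tfrac1{\tilde\lambda^2}$ coefficient), the formula should collapse to \eqref{mm}.

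The main obstacle I anticipate is the sectional-curvature computation $\tilde g(\tilde R(X,\tilde\varphi X)\tilde\varphi X,X)$: it requires differentiating the coefficients $A/(2\tilde\lambda)$ and $B/(2\tilde\lambda)$ along $X$ and $\tilde\varphi X$, which brings in second derivatives of $\tilde\lambda$ (precisely $X(A)$, $\tilde\varphi X(B)$, and cross terms $X(B)$, $\tilde\varphi X(A)$), and one must carefully track how the $\xi$-component $2\xi$ in $[X,\tilde\varphi X]$ interacts with $\tilde\nabla_\xi$ on $X$ and $\tilde\varphi X$ via \eqref{eq2}. The bookkeeping of signs — both the metric signs $\varepsilon_X=+1$, $\varepsilon_{\tilde\varphi X}=-1$ and the case distinction $\tilde\kappa\gtrless-1$ — is where errors creep in; the claim that \eqref{m} and \eqref{mm} hold verbatim in both cases is the delicate assertion to verify. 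A cleaner route, which I would try first, is to avoid recomputing sectional curvature from scratch: instead contract the second Bianchi identity or use the known 3-dimensional identity expressing $\tilde R$ entirely in terms of $\tilde Q$, $\tilde g$ and $\tau$, plug in \eqref{q1,q3}, and read off $\tilde R(X,\tilde\varphi X)\tilde\varphi X$; then the only new input needed is $\operatorname{div}$ of the $\tilde\mu\tilde h$ term, which \eqref{eq7}--\eqref{eq9} control. Either way the identity \eqref{m} is the anchor, and \eqref{mm} follows by algebraic substitution once $\xi(\tau)=0$ and the sectional curvature in the $\mathcal D$-plane are in hand.
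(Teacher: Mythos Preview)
Your plan is correct and matches the paper's proof almost exactly: the paper also computes $\bigtriangleup\tilde\lambda$ directly in the $\tilde h$-frame via $\bigtriangleup\tilde\lambda=\sum_i\varepsilon_i\bigl(X_iX_i(\tilde\lambda)-(\tilde\nabla_{X_i}X_i)\tilde\lambda\bigr)$ using \eqref{zetak} and \eqref{eq3}, and then obtains \eqref{mm} by computing $\tilde R(X,\tilde\varphi X)\tilde\varphi X$ from the connection formulas \eqref{eq1}--\eqref{eq4}, combining with $\tilde Q\xi=2\tilde\kappa\xi$ via $\tau=-2\tilde g(\tilde R(X,\tilde\varphi X)\tilde\varphi X,X)+2\tilde g(\tilde Q\xi,\xi)$. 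Your detour through $\xi(\tau)=0$ is unnecessary (the paper never uses it here), and the alternative 3-dimensional curvature identity you suggest is not needed either; the direct sectional-curvature computation you describe as the ``main obstacle'' is precisely what the paper carries out.
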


\begin{proof}
We will give the proof for $\tilde{\kappa}>-1$. The proof for $\tilde{\kappa}%
<-1$  is similar to $\tilde{\kappa}>-1$. Using the definition of the
Laplacian and equations (\ref{zetak}) and (\ref{eq3}) we obtain 
\begin{eqnarray*}
\bigtriangleup \tilde{\lambda} &=&-XX(\tilde{\lambda})+\tilde{\varphi}X%
\tilde{\varphi}X(\tilde{\lambda})+\xi \xi (\tilde{\lambda}) \\
&&+(\tilde{\nabla}_{X}X)\tilde{\lambda}-(\tilde{\nabla}_{\tilde{\varphi}X}%
\tilde{\varphi}X)\tilde{\lambda}-(\tilde{\nabla}_{\xi }\xi )\tilde{\lambda}
\\
&=&-X(A)+\tilde{\varphi}X(B)+\frac{1}{2\tilde{\lambda}}(A^{2}-B^{2}).
\end{eqnarray*}

In order to compute scalar curvature $\tau $ of $M$, we will use (\ref{eq1}%
)-(\ref{eq4}). Defining the curvature tensor $\tilde{R}$, after some
calculations we get%
\begin{eqnarray*}
\tilde{R}(X,\tilde{\varphi}X)\tilde{\varphi}X &=&\tilde{\nabla}_{X}\tilde{%
\nabla}_{\tilde{\varphi}X}\text{ }\tilde{\varphi}X-\tilde{\nabla}_{\tilde{%
\varphi}X}\tilde{\nabla}_{X}\text{ }\tilde{\varphi}X-\tilde{\nabla}_{\left[
X,\tilde{\varphi}X\right] }\tilde{\varphi}X \\
&=&\tilde{\nabla}_{X}\left( -\frac{A}{2\tilde{\lambda}}X\right) -\tilde{%
\nabla}_{\tilde{\varphi}X}\left( -\frac{B}{2\tilde{\lambda}}X+(1-\tilde{%
\lambda})\xi \right) -\tilde{\nabla}_{-\frac{B}{2\tilde{\lambda}}X+\frac{A}{2%
\tilde{\lambda}}\tilde{\varphi}X+2\xi }\tilde{\varphi}X \\
&=&-X\left( \frac{A}{2\tilde{\lambda}}\right) X-\frac{A}{2\tilde{\lambda}}%
\tilde{\nabla}_{X}X+\tilde{\varphi}X\left( \frac{B}{2\tilde{\lambda}}\right)
X+\frac{B}{2\tilde{\lambda}}\tilde{\nabla}_{\tilde{\varphi}X}X \\
&&+\tilde{\varphi}X(\tilde{\lambda})\xi -(1-\tilde{\lambda})\tilde{\nabla}_{%
\tilde{\varphi}X}\xi +\frac{B}{2\tilde{\lambda}}\tilde{\nabla}_{X}\tilde{%
\varphi}X-\frac{A}{2\tilde{\lambda}}\tilde{\nabla}_{\tilde{\varphi}X}\text{ }%
\tilde{\varphi}X-2\tilde{\nabla}_{\xi }\text{ }\tilde{\varphi}X \\
&=&-X\left( \frac{A}{2\tilde{\lambda}}\right) X+\frac{A}{2\tilde{\lambda}}%
\frac{B}{2\tilde{\lambda}}\tilde{\varphi}X+\tilde{\varphi}X\left( \frac{B}{2%
\tilde{\lambda}}\right) X \\
&&+\frac{B}{2\tilde{\lambda}}\left( -\frac{A}{2\tilde{\lambda}}\tilde{\varphi%
}X-(\tilde{\lambda}+1)\xi \right) \\
&&+\tilde{\varphi}X(\tilde{\lambda})\xi +(1+\tilde{\lambda})(1-\tilde{\lambda%
})X \\
&&+\frac{B}{2\tilde{\lambda}}\left( -\frac{B}{2\tilde{\lambda}}X+(1-\tilde{%
\lambda})\xi \right) +\frac{A}{2\tilde{\lambda}}\left( \frac{A}{2\tilde{%
\lambda}}X\right) +2\left( \frac{\tilde{\mu}}{2}X\right) \\
&=&\left[ -X\left( \frac{A}{2\tilde{\lambda}}\right) +\tilde{\varphi}X\left( 
\frac{B}{2\tilde{\lambda}}\right) -\frac{B^{2}}{4\tilde{\lambda}^{2}}+\frac{%
A^{2}}{4\tilde{\lambda}^{2}}+(-\tilde{\lambda}^{2}+1)+\tilde{\mu}\right] X \\
&=&\left[ -\frac{1}{2}\left( \frac{X(A)\tilde{\lambda}-A^{2}}{\tilde{\lambda}%
^{2}}+\frac{\tilde{\varphi}X(B)\tilde{\lambda}-B^{2}}{\tilde{\lambda}^{2}}%
\right) +\frac{1}{4\tilde{\lambda}^{2}}(A^{2}-B^{2})+(-\tilde{\lambda}%
^{2}+1)+\tilde{\mu}\right] X \\
&=&\left[ \frac{1}{2}\frac{-X(A)+\phi X(B)}{\tilde{\lambda}}+\frac{1}{2%
\tilde{\lambda}^{2}}(A^{2}-B^{2})+\frac{1}{4\tilde{\lambda}^{2}}%
(A^{2}-B^{2})+(-\tilde{\lambda}^{2}+1)+\tilde{\mu}\right] X \\
&=&\left[ \frac{1}{2\tilde{\lambda}}\left( -X(A)+\tilde{\varphi}X(B)+\frac{1%
}{2\tilde{\lambda}}(A^{2}-B^{2})\right) +\frac{1}{2\tilde{\lambda}^{2}}%
(A^{2}-B^{2})+(-\tilde{\lambda}^{2}+1)+\tilde{\mu}\right] X \\
&=&\left[ \frac{1}{2\tilde{\lambda}}\bigtriangleup \tilde{\lambda}-\frac{1}{2%
\tilde{\lambda}^{2}}\parallel grad\tilde{\lambda}\parallel ^{2}-\tilde{\kappa%
}+\tilde{\mu}\right] X
\end{eqnarray*}%
and namely%
\begin{equation*}
\tilde{g}(\tilde{R}(X,\tilde{\varphi}X)\tilde{\varphi}X,X)=-\frac{1}{2\tilde{%
\lambda}}\bigtriangleup \tilde{\lambda}+\frac{1}{2\tilde{\lambda}^{2}}%
\parallel grad\tilde{\lambda}\parallel ^{2}+\tilde{\kappa}-\tilde{\mu}.
\end{equation*}

By using definition of scalar curvature, i.e. $\tau =TrQ=-\tilde{g}(QX,X)+%
\tilde{g}(Q\tilde{\varphi}X,\tilde{\varphi}X)+\tilde{g}(Q\xi ,\xi ),$ and
using (\ref{Ricci}), we have%
\begin{eqnarray*}
\tau &=&-2\tilde{g}(\tilde{R}(X,\tilde{\varphi}X)\tilde{\varphi}X,X)+2\tilde{%
g}(Q\xi ,\xi ) \\
&=&\frac{1}{\tilde{\lambda}}\bigtriangleup \tilde{\lambda}-\frac{1}{\tilde{%
\lambda}^{2}}\parallel grad\tilde{\lambda}\parallel ^{2}-2(\tilde{\kappa}-%
\tilde{\mu})+4\tilde{\kappa} \\
&=&\frac{1}{\tilde{\lambda}}\bigtriangleup \tilde{\lambda}-\frac{1}{\tilde{%
\lambda}^{2}}\parallel grad\tilde{\lambda}\parallel ^{2}+2(\tilde{\kappa}+%
\tilde{\mu}).
\end{eqnarray*}%
The last equation gives (\ref{mm}).
\end{proof}


\begin{thebibliography}{99}
\bibitem{alekseevski1} D.V. Alekseevski, V. Cort\'{e}s, A.S. Galaev, T.
Leistner, \emph{Cones over pseudo-Riemannian manifolds and their holonomy},
J. Reine Angew. Math. \textbf{635} (2009), 23-69.

\bibitem{alekseevski2} D.V. Alekseevski, C. Medori, A. Tomassini, \emph{%
Maximally homogeneous para-CR manifolds,} Ann. Glob. Anal. Geom. \textbf{30}
(2006), 1--27.

\bibitem{Bejan} C.L.~Bejan, \emph{Almost parahermitian structures on the
tangent bundle of an almost para-coHermitian manifold}, In: The Proceedings
of the Fifth National Seminar of Finsler and Lagrange Spaces (Bra\thinspace
sov, 1988), 105--109, Soc. Stiinte Mat. R. S. Romania, Bucharest, 1989.

\bibitem{BuchRosc} K.~Buchner, R.~Rosca, \emph{Vari\'{e}tes para-coK\"{a}%
hlerian \'{a} champ concirculaire horizontale}, C. R. Acad. Sci. Paris 
\textbf{285} (1977), Ser. A, 723--726.

\bibitem{BuchRosc2} K.~Buchner, R.~Rosca, \emph{Co-isotropic submanifolds of
a para-coK\"{a}hlerian manifold with concicular vector field}, J. Geometry 
\textbf{25} (1985), 164--177.

\bibitem{MOTE} B. Cappelletti Montano, L. Di Terlizzi, \emph{Geometric
structures associated to a contact metric }$(\kappa ,\mu )$\emph{-space},
Pacific J. Math. \textbf{246} no:2 (2010) 257--292.

\bibitem{Biz} B. Cappelletti-Montano, I. Kupeli Erken, C. Murathan, \emph{%
Nullity conditions in paracontact geometry}, Diff. Geom. Appl. \textbf{30}
(2012), 665--693.

\bibitem{cortes1} V. Cort\'{e}s, C. Mayer, T. Mohaupt, F. Saueressing, \emph{%
Special geometry of Euclidean supersymmetry,} 1. Vector multiplets. J. High
Energy Phys. (2004) 03:028: 73.

\bibitem{cortes2} V. Cort\'{e}s, M.A. Lawn, L. Sch\"{a}fer, \emph{Affine
hyperspheres associated to special para-K\"{a}hler manifolds}, Int. J. Geom.
Methods Mod. Phys. \textbf{3} (2006), 995--1009.

\bibitem{DACKO} P. Dacko, \emph{On almost para-cosymplectic manifolds},
Tsukuba J. Math. \textbf{28} (2004), 193--213.

\bibitem{kaneyuki1} S. Kaneyuki , F. L.Williams, \emph{Almost paracontact
and parahodge structures on manifolds}, Nagoya Math. J. \textbf{99} (1985),
173--187.

\bibitem{KO} T. Koufogiorgos and C. Tsichlias,\emph{\ On the existence of a
new class of contact metric manifolds, }Canad. Math. Bull. Vol \textbf{43}
(2000), 440-447.{\small \ }

\bibitem{KO1} T. Koufogiorgos and C. Tsichlias,\emph{\ Generalized }$(\kappa
,\mu )$-\emph{contact metric manifolds with }$\Vert grad$ $\kappa \Vert =$%
\emph{constant, }J. Geom. \textbf{78} (2003), 83-91.{\small \ }

\bibitem{KO2} T. Koufogiorgos and C. Tsichlias, \emph{Generalized }$(\kappa
,\mu )$-\emph{contact metric manifolds with }$\xi (\mu )=0${\small ,}\emph{\ 
}Tokyo J. Math. Vol \textbf{31} (2008), 39-57.

\bibitem{kupmur} I. Kupeli Erken, C. Murathan, \emph{A Complete Study of
Three-Dimensional Paracontact (}$\kappa ,\mu ,\nu )$-\emph{spaces},
Submitted. Available in Arxiv:1305.1511 $\left[ \text{math. DG}\right] $.

\bibitem{ONEILL} B. O'Neill, \emph{Semi-Riemann Geometry}, Academic Press.
New York, 1983.

\bibitem{RoscVanh} R.~Rosca, L. Vanhecke, \emph{S\'{u}r une vari\'{e}t\'{e}
presque paracok\"{a}hl\'{e}rienne munie d'une connexion self-orthogonale
involutive}, Ann. \thinspace Sti. Univ. \textquotedblleft Al. I.
Cuza\textquotedblright\ Ia\thinspace si \textbf{22} (1976), 49--58.

\bibitem{Welyczko} J.~We\l {}yczko, \emph{On basic curvature identities for
almost (para)contact metric manifolds}. Available in Arxiv: 1209.4731v1 $%
\left[ \text{math. DG}\right] $.

\bibitem{Za} S. Zamkovoy, \emph{Canonical connections on paracontact
manifolds}, Ann. Glob. Anal. Geom. \textbf{36} (2009), 37--60.
\end{thebibliography}
\end{document}